\newtheorem{theorem}{Theorem}[section]
\newtheorem{lemma}{Lemma}[section]
\newtheorem{corollary}{Corollary}[section]
\newtheorem{proposition}{Proposition}[section]
\newtheorem{rmk}{Remark}[section]
\begin{document}
\begin{center}
\Large{\textbf{Strong in-domatic number in digraphs.}}
\end{center}
\[
\]

\begin{center}
Laura Pastrana-Ram\'irez \footnote{This research did not receive any specific grant from funding agencies in the public, commercial, or not-for-profit sectors.} (lauparra27@gmail.com),
\\ Roc\'io S\'anchez-L\'opez \footnote{This research did not receive any specific grant from funding agencies in the public, commercial, or
not-for-profit sectors.} (usagitsukinomx@yahoo.com.mx),
\\  Miguel Tecpa-Galv\'an \footnote{ This research did not receive any specific grant from funding agencies in the public, commercial, or
not-for-profit sectors.} (Corresponding author) (miguel.tecpa05@gmail.com).
\end{center}

\begin{center}
\textit{Facultad de Ciencias, Universidad Nacional Autónoma de México
Circuito Exterior s/n, Coyoacán, Ciudad Universitaria, 04510, Ciudad de México, CDMX.}
\end{center}

\begin{abstract}
Let $D=(V,A)$ be a digraph and $\mathfrak{S}$ a partition of $V(D)$. We say that $\mathfrak{S}$ is a strong in-domatic partition if every $S$ in $\mathfrak{S}$ holds that every vertex not in $S$ has at least one out-neighbor in $S$, that is $S$ is an in-dominating set, and $D\langle S \rangle$ is strongly connected. The maximum number of elements in a strong in-domatic partition is called the strong in-domatic number of $D$ and it is denoted by $\mathsf{d}_{s}^{-}(D)$. In this paper we introduce those concepts and determine the value of $\mathsf{d}_{s}^{-}$ for semicomplete digraphs and planar digraphs.  We  show some structural properties of digraphs which have a strong in-domatic partition and we  see some bounds for  $\mathsf{d}_{s}^{-}(D)$. Then we  study this concept in the Cartesian product, composition, line digraph and other associated digraphs.

In addition, we  characterize strong in-domatic critical digraphs and we  give two families strong in-domatic critical digraphs which hold some properties, where a strong in-domatic critical digraph $D$ holds that  $\mathsf{d}_{s}^{-}(D-e) = \mathsf{d}_{s}^{-}(D) -1 $ for every $e$ in $A(D)$. 
\end{abstract}

\textbf{Keywords.} Domatic partition, connected dominating set, digraphs, in-dominating.

\textbf{MSC clasification.} 05C20, 05C40, 05C69, 05C76.

\section{Introduction}

Let $G$ be a graph, a \textit{domatic partition of $V(G)$}, say $\mathfrak{S}$, is a partition of $V(G)$ such that for every $S$ in $\mathfrak{S}$, $S$ is a dominating set. The maximum number of elements in such partition is called the  \textit{domatic number of $G$}, denoted by $\mathsf{d}(G)$. This concept was introduced by Cockayne and Hedetniemi in \cite{2}. In \cite{complex2}, Garey and Johnson showed that, for every $k \geq 3$, determine whether or not the domatic number of a given graph is $k$ is NP-complete.
In \cite{complex}, Poon, Yen and Ung proved that finding a domatic partition into 3 dominating sets is NP-complete on planar bipartite graphs, and finding a domatic partition with $\mathsf{d}(G)$ elements in co-bipartite graphs is NP-complete. In \cite{complex1}, Riege, Rothe, Spakowski and Yamamoto showed that, given an arbitrary graph $G$, it is possible to determine if $V(G)$ can be partitioned into 3 disjoint dominating sets with a deterministic algorithm in time $2.695^{n}$ (up to polynomial factors) and in polynomial space.

Domatic partitions in graphs have been studied for some researches due its applications and theoretical results (see \cite{ejem3}, \cite{complex2}, \cite{ejem2} \cite{ejem1} \cite{complex},  \cite{complex1}). Due a large amount of kinds of dominating sets  (see \cite{16} and \cite{4}), several authors defined variants on the domatic number in graphs, for instance, total domatic number (Cockayne, Hedetniemi and Dawes \cite{total}), idomatic number (Cockayne and Hedetniemi \cite{2}), $k$-domatic number (Zelinka \cite{kdomatic}) and tree domatic number (Chen \cite{tree}). 
In the same spirit, Laskar and Hedetniemi \cite{5} introduced the \textit{connected domatic number} as follows: for a digraph $G$ a \textit{connected domatic partition of $V(G)$} is a domatic partition such that every element in such partition induces a connected graph in $G$. The maximum number of elements in a connected domatic partition of $V(G)$ is the \textit{domatic connected number of $G$},  denoted by $\mathsf{d}_{c}(G)$. Whenever a graph $G$ holds that $\mathsf{d}_{c} (G-a) < \mathsf{d}_{c} (G)$ for every edge $a$ of $G$, it is said that $G$ is a  \textit{connectively domatically critical graph}. Such graphs were introduced and characterized by Zelinka in  \cite{6}.
Hartnell and Rall studied the connected domatic number in planar graphs \cite{3}, in particular, they showed the following results.

\begin{theorem}\cite{3}
\label{PlanarT1}
Let $G$ be a planar graph. The connected domatic number of $G$ is at most $4$, and $K_{4}$ is the only planar graph achieving this bound.
\end{theorem}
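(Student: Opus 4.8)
The plan is to prove the bound and the extremal case separately, using throughout that planarity is preserved under taking minors. For the upper bound I would start from an arbitrary connected domatic partition $\mathfrak{S}=\{S_{1},\dots,S_{k}\}$ of $V(G)$ and contract each class $S_{i}$ to a single vertex $s_{i}$. Since every $S_{i}$ induces a connected subgraph, the resulting graph $H$ is a minor of $G$, hence planar. Moreover, for any two classes $S_{i}$ and $S_{j}$, the fact that $S_{i}$ is a dominating set forces some vertex of $S_{j}$ to have a neighbour in $S_{i}$, so $s_{i}s_{j}\in E(H)$; thus $H$ contains $K_{k}$ as a spanning subgraph. Because $K_{k}$ is then planar, we must have $k\le 4$, which gives $\mathsf{d}_{c}(G)\le 4$.

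For the extremal case, suppose $\mathsf{d}_{c}(G)=4$ and fix a partition into connected dominating sets $S_{1},S_{2},S_{3},S_{4}$. The local observation I would lean on is that every vertex $v$ needs at least one neighbour in each of the three classes not containing it (domination), and at least one further neighbour inside its own class whenever that class has more than one vertex (connectivity). In particular $\delta(G)\ge 3$. If some vertex $v$ has degree exactly $3$, its three edges are used up by the three other classes, so $v$ has no neighbour in its own class; connectivity then forces that class to be $\{v\}$, and a singleton dominating set is a universal vertex, so $n-1=3$ and $n=4$. With $n=4$ and four nonempty classes, every class is a singleton, hence a universal vertex, and therefore $G=K_{4}$.

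It remains to rule out $\delta(G)\ge 4$ (equivalently $\delta(G)\in\{4,5\}$, since planarity gives $\delta(G)\le 5$), and this is where I expect the real difficulty. Pure edge counting is not enough: combining $|E(G)|\le 3n-6$ with the internal-edge and cross-edge lower bounds forced by connectivity and by mutual domination still leaves balanced configurations, such as four classes of size two, uncontradicted. To close this case I would instead exploit the geometry of the planar embedding of the $K_{4}$-minor: in any plane drawing of $K_{4}$ one branch set is enclosed by the triangle formed by the other three, and mutual domination between the enclosed class and an outer class is hard to reconcile with planarity unless the classes are trivial. Concretely, I would feed the rigidity of low-degree vertices (a degree-$4$ vertex in a non-singleton class has exactly one neighbour in each other class and exactly one inside) together with the Euler deficiency $\sum_{v}(6-\deg v)\ge 12$ and a bound on the number of universal vertices available for singleton classes, in order to force a forbidden $K_{5}$ minor. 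Verifying that this enclosure-plus-counting argument indeed eliminates every configuration with $\delta(G)\ge 4$ is the main obstacle.
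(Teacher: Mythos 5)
The upper-bound half of your argument is complete and correct: contracting the connected classes yields a $K_{k}$ minor because mutual domination supplies an edge between every pair of classes, and planarity then forces $k\le 4$. (The paper itself cites this theorem from Hartnell and Rall and gives no proof, so there is nothing internal to compare against.) The equality case, however, has a genuine gap that you yourself flag: after disposing of a vertex of degree $3$, you leave the regime $\delta(G)\ge 4$ open, and the ``enclosure-plus-counting'' strategy you sketch is not a proof. As you correctly observe, combining $|E(G)|\le 3n-6$ with the forced internal and cross edges does not eliminate balanced configurations such as four classes of size two, and the assertion that mutual domination between an enclosed branch set and an outer one is ``hard to reconcile with planarity'' is precisely the statement that needs proving; nothing in the proposal actually produces a forbidden minor in that case.

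The irony is that the minor machinery you already built for the upper bound closes the gap in a few lines, with no degree or Euler counting at all. Suppose $\mathsf{d}_{c}(G)=4$ with classes $S_{1},\dots ,S_{4}$ and some class, say $S_{4}$, has at least two vertices. Let $T$ be a spanning tree of $\langle S_{4}\rangle$ and $x$ a leaf of $T$. Then the five sets $S_{1}$, $S_{2}$, $S_{3}$, $\{x\}$ and $S_{4}\setminus \{x\}$ are pairwise disjoint, each induces a connected subgraph (the tree $T-x$ spans the last one), and every pair is joined by an edge: for $i\le 3$, domination by $S_{i}$ of $x$ and of any vertex of $S_{4}\setminus \{x\}$ gives the edges from $S_{i}$ to $\{x\}$ and to $S_{4}\setminus \{x\}$ as well as between the $S_{i}$ themselves, and the tree edge at the leaf $x$ joins $\{x\}$ to $S_{4}\setminus \{x\}$. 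These five branch sets exhibit a $K_{5}$ minor, contradicting planarity. Hence all four classes are singletons, each singleton dominating class is a universal vertex, so $n=4$ and $G=K_{4}$. I would replace your degree analysis of the extremal case by this splitting argument; as submitted, the characterization of equality is not established.
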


\begin{theorem}\cite{3}
\label{PlanarT2}
Let $G$ be a graph such that $\mathsf{d}_{c}(G) = 3$ and let $\{ V_{1} , V_{2},  V_{3}\}$ be any connected domatic partition of $V(G)$. If $G$ is planar, then $\langle V_{1} \rangle$, $\langle V_{2} \rangle$ and $\langle V_{3} \rangle$ are paths.
\end{theorem}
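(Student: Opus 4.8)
The plan is to use the elementary characterization that a connected graph fails to be a path precisely when it contains a cycle or a vertex of degree at least $3$; it therefore suffices to show that neither can occur inside $\langle V_1 \rangle$, the cases of $\langle V_2 \rangle$ and $\langle V_3 \rangle$ being identical by symmetry. To bring planarity to bear I would first reduce $G$ to a much smaller planar graph by contraction. Since $\langle V_2 \rangle$ and $\langle V_3 \rangle$ are connected, contracting each of them to a single vertex, say $v_2$ and $v_3$, produces a minor $G'$ of $G$; because planarity is preserved under taking minors, $G'$ is planar. I would then record the structure of $G'$ on the vertex set $V_1 \cup \{v_2, v_3\}$: the subgraph induced by $V_1$ is exactly $\langle V_1 \rangle$, and the domination hypothesis supplies the remaining edges.

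Indeed, because $V_2$ is a dominating set, every vertex of $V_1$ (being outside $V_2$) has a neighbor in $V_2$, hence becomes adjacent to $v_2$ in $G'$; likewise every vertex of $V_1$ is adjacent to $v_3$. Finally $v_2$ and $v_3$ are adjacent because $V_2$ dominates the nonempty set $V_3$. Thus $G'$ contains $\langle V_1 \rangle$ together with two extra vertices each joined to all of $V_1$ and to one another. Now suppose $\langle V_1 \rangle$ had a vertex $u$ of degree at least $3$, with distinct neighbors $a,b,c \in V_1$. Then the disjoint sets $\{a,b,c\}$ and $\{u, v_2, v_3\}$ span all nine cross-edges, giving a $K_{3,3}$ subgraph of the planar graph $G'$, which is impossible. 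Similarly, if $\langle V_1 \rangle$ contained a cycle, I would split that cycle into three consecutive arcs and contract each arc to a vertex; these three vertices form a triangle, each is adjacent to $v_2$ and to $v_3$, and $v_2$ is adjacent to $v_3$, so $G'$ would have a $K_5$ minor, again contradicting planarity by the Kuratowski--Wagner characterization.

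Consequently $\langle V_1 \rangle$ is a connected graph with no cycle and with maximum degree at most $2$, i.e. a path, and applying the same argument to the other two parts completes the proof. The step I expect to require the most care is the passage to $G'$: one must verify that contracting the connected subgraphs $\langle V_2 \rangle$ and $\langle V_3 \rangle$ genuinely yields a planar minor and that the relevant adjacencies survive the contraction, in particular that $v_2$ and $v_3$ become mutually adjacent and universal to $V_1$; and in the cycle case one must phrase the $K_5$ obstruction at the level of the three contracted arcs rather than as a literal subgraph. The degree case is the cleaner of the two, since it produces an honest $K_{3,3}$ subgraph with no further contraction needed.
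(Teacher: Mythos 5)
Your argument is correct, but note that the paper does not prove this statement at all: Theorem \ref{PlanarT2} is quoted from Hartnell and Rall \cite{3} and used as a black box, so there is no internal proof to compare against. Your proof stands on its own. The reduction is sound: contracting the connected subgraphs $\langle V_2\rangle$ and $\langle V_3\rangle$ yields a planar minor $G'$ in which $v_2$ and $v_3$ are universal to $V_1$ (by domination of the vertices of $V_1$, which lie outside $V_2$ and $V_3$) and mutually adjacent (by domination of the nonempty class $V_3$ by $V_2$), while $G'\langle V_1\rangle=\langle V_1\rangle$ since contraction does not disturb edges inside $V_1$. A vertex of degree at least $3$ in $\langle V_1\rangle$ then gives an honest $K_{3,3}$ subgraph on the six distinct vertices $\{a,b,c\}\cup\{u,v_2,v_3\}$, and a cycle in $\langle V_1\rangle$ gives a $K_5$ minor after contracting it into three consecutive arcs (for a triangle this is already a $K_5$ subgraph); both contradict planarity, and a connected graph with no cycle and maximum degree at most $2$ is a path. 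The only points worth making explicit in a written version are that the three classes are nonempty, that the six (respectively five) branch sets are pairwise disjoint, and that each contracted arc of the cycle inherits adjacency to $v_2$ and $v_3$ because it contains at least one vertex of $V_1$ -- all of which you have effectively flagged yourself.
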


In \cite{7}, Zelinka extended the concept of domatic number to digraphs as follows: for a digraph $D$, an \textit{in-domatic partition of $V(D)$} is a partition of $V(D)$ into in-dominating sets. The maximum number of classes in an in-domatic partition is the \textit{in-domatic number of the digraph $D$}, denoted by $\mathsf{d}^{-}(D)$. Ben\'itez-Bobadilla and Pastrana-Ram\'irez \cite{1} studied this parameter in the Cartesian product of digraphs and some associated digraphs, as the line digraph.

In this paper we extend the concept of connected domatic number to digraphs as follows: for a digraph $D$ a \textit{strong in-domatic partition of $V(D)$} is a partition of $V(D)$ into strong in-dominating sets. The maximum number of classes in a strong in-domatic partition is the \textit{strong in-domatic number of $D$}, denoted by $\mathsf{d}_{s}^{-}(D)$. A strong in-domatic partition of $D$ with $\mathsf{d}_{s}^{-}(D)$ classes is called a \textit{$\mathsf{d}_{s}^{-}$-partition of $V(D)$}. We say that a digraph $D$ is a \textit{strong in-domatic critical digraph} if for every arc $a$ of $D$, $D-a$ is strong and $\mathsf{d}_{s}^{-}(D-a) = \mathsf{d}_{s}^{-}(D) -1$. 

In this paper we show some properties of strong in-domatic partitions in digraphs and some bounds for the strong in-domatic number.

This paper follows the next order: in section 3, some bounds for the strong in-domatic number are given. We prove a characterization of strong in-domatic critical digraphs, and an infinite family of such digraphs will be shown. Also, we  show an extension of Theorem \ref{PlanarT1} and Theorem \ref{PlanarT2} for planar digraphs. In section 4, we study the strong in-domatic number in the Cartesian product and composition of digraphs. As a consequence of the result for composition of digraphs, we will show that, given two natural numbers, say $p$ and $m$, there exists a digraph of order $p$ with strong in-domatic number $m$. Also, we will show that, given a natural number $p \geq 2$ and a divisor of $p$, say $n$, there exists a strong in-domatic critical digraph of order $p$ with strong in-domatic number $n$.  
In section 5, we work these new concepts in certain associated digraphs, as line digraph. We finish the paper with a brief note about the strong out-domatic number.


\section{Terminology and notation} 
 
 For general concepts we refer the rader to  \cite{8} and \cite{4}. Let  $G=(V(G),E(G))$ be a simple undirected graph. An \textit{ isolated vertex} of $G$ is a vertex whose degree is zero. For a nonempty subset  of  $V(G)$, say $S$, the \textit{ subgraph induced by $S$} is denoted by $\langle S \rangle$. If $S$ is such that  $\langle S \rangle$ is complete, then we say that $S$ is a \textit{ clique} of $G$. We say that $S$ is a \textit{ dominating set} if for every $x$ in $V(G) \setminus S$ there exists $z$ in $S$ such that $xz\in E(G)$. We say that a set $S$ of vertices of $G$ is a \textit{dominating clique} if $S$ is a dominating set and $S$ is a clique. The minimum cardinality among all dominating clique of $G$, denoted by $\gamma_{cl}(G)$, is called the \textit{clique domination number of $G$}. For a connected graph $G$, a \textit{vertex-cut} is a set $S$ of vertices of $G$ such that $G-S$ is disconnected. A vertex cut of minimum cardinality is called \textit{minumum vertex-cut} and this cardinality is denoted by $\kappa (G)$. A vertex-cut with $\kappa (G)$ vertices is a \textit{$\kappa $-set of $G$}.

Throughout the paper, $D=(V(D),A(D))$ denotes a loopless digraph with vertex set $V(D)$ and arc set $A(D)$. For an arc ($u$,$v$), $u$ and $v$ are its \textit{end-vertices}; we say that the end-vertices are \textit{adjacent}, we also say that $u$ \textit{out-dominates} $v$ and $v$ \textit{in-dominates}  $u$.
We  say that the arc ($u$,$v$) is \textit{symmetric} if ($v$,$u$) $\in$ $A(D)$.  Let $S$ be a  subset of $V(D)$ and $x$ in $V(D)$, we say that \textit{ $x$ is in-dominated by $S$} (\textit{$x$ is out-dominated by $S$}) if  $z$ in-dominates $x$ for some $z$ in $S$ ($z$ out-dominates $x$ for some $z$ in $S$). We  say that $S$  is  an \textit{in-dominating set} (\textit{out-dominating set}) if every vertex in $V(D) \setminus S$ is in-dominated by $S$ (out-dominated by $S$).  
If $x$ is a vertex of $D$, then the \textit{ ex-neighborhood  of $x$} is the set $\{ z \in V(D) \, : \, (x,z) \in A(D)\}$, denoted by $N^{+}(x)$, the  \textit{ in-neighborhood of $x$} is the set $\{ z \in V(D)  :  (z,x) \in A(D)\}$, denoted by $N^{-}(x)$. The \textit{neighborhood of $x$} is the set $N^{+}(x) \cup N^{-}(x)$ and it is denoted by $N(x)$. The \textit{ out-degree} $\delta_{D}^{+}(x)$ of a vertex $x$ is the number $|N^{+}(x)|$, the \textit{ in-degree} $\delta_{D}^{-}(x)$ of a vertex $x$ is the number $|N^{-}(x)|$. 

A vertex $v$ is called  \textit{isolated} if $\delta_{D}^{+}(v)=0=\delta_{D}^{-}(v)$. For a subset $S$ of $V(D)$,  the \textit{subdigraph of $D$ induced  by  $S$}, denoted by $D\langle S \rangle$,  has $V(D\langle S \rangle)$ = $S$ and $A(D\langle S \rangle)$ = \{($u$,$v$) $\in$ $A(D)$ :  \{$u$, $v$\} $\subseteq$ $S$\}. For a subset $E$ of $A(D)$,  the \textit{ subdigraph of $D$ induced  by the arc set $E$}, denoted by $D[ E ] $,  has $V(D[E]) = \{ x \in V(D) : x\text{ is an end-vertex of some } e \in E \}$ and $A(D[E]) =E$. 

A pair of digraphs $D$ and $H$ are \textit{isomorphic}, denoted by $D \cong H$, if there exists a bijection $f:V(D)\rightarrow V(H)$ such that $(u,v) \in A(D)$ if an only if $(f(u),f(v)) \in A(H)$.  Let  $S_1$ and $S_2$ be subsets of $V(D)$, an arc ($u$,$v$) of $D$ will be called an $S_1S_2$-\textit{ arc} whenever $u$ $\in$ $S_1$ and $v$ $\in$ $S_2$. If $S_1 = \{x\}$ or $S_2 = \{x\}$, then we will write $xS_2$-\textit{ arc} or $S_1x$-\textit{ arc}, respectively.

A \textit{ directed walk} $W$ in $D$ is a sequence of vertices $(x_{0}, x_{1}, \dots , x_{n})$ such that $(x_{i}, x_{i+1}) \in A(D)$ for every $i$ in $\{ 0, 1 \dots , n-1\}$. We  say that $W$ is an $x_0x_n$-\textit{ walk}. 
The \textit{ length} of $W$ is the number $n$. If $x_i$ $\neq$ $x_j$ for all $i$ and  $j$ such that \{$i$, $j$\} $\subseteq$ \{0, $\ldots$ , $n$\} and $i$ $\neq$ $j$,  then $W$ is called  a \textit{ directed  path} ($x_0x_n$-\textit{ path}). 
Let   \{$x_i$, $x_j$\} be a  subset of  $V(W)$, with $i \leq j$, the $x_ix_j$-walk ($x_i$, $x_{i+1}$, $\ldots$ , $x_{j-1}$, $x_j$) contained in $W$ will be denoted by $(x_i,W,x_j)$. If $T=(x_1, \ldots, x_n)$ and $T'=(z_1, \ldots , z_m)$ are directed walks in $D$ and $x_n=z_1$, we donte by $T\cup T'$ the directed walk $(x_1, \ldots , x_n=z_1, z_2, \ldots , z_m)$.
 A  \textit{ directed cycle} is a directed walk ($v_1$, $v_2$,  $\ldots$ , $v_n$, $v_1$) such that $v_i$ $\neq$ $v_j$ for all $i$ and $j$ such that  \{$i$, $j$\} $\subseteq$ \{1,  $\ldots$ , $n$\} and $i$ $\neq$ $j$.
In what follows we  write walk, path and cycle instead of directed walk, directed path and directed cycle, respectively.

 We  say that $D$ is \textit{strong} if, for every pair of vertices $u$ and $v$ in $D$, there exists a $uv$-walk and there exists a $vu$-walk in $D$. If $D$ is a digraph and $S$ is a subset of $V(D)$ we say that $S$ is a \textit{strong in-dominating set} if $S$ is an in-dominating set and $D\langle S \rangle$ is a strong digraph. If $D$ is a strong digraph, a nonempty subset of $A(D)$, say $E$, is a \textit{strong cover of $D$} if $D[E]$ is a spanning strongly connected subdigraph of $D$.  Set $\mathcal{E} = \{ |E| : E \text{ is a partition of } A(D) \text{ into strong covers}\} $, and let  $\Lambda (D)$ be the maximum of $\mathcal{E}$. A partition of $A(D)$ into strong covers with $\Lambda (D)$ elements is called \textit{$\Lambda$-partition of $A(D)$}.

 A digraph is \textit{semicomplete} if for every $u$ and $v$ in $V(D)$ we have that $\{(u,v), (v,u)\} \cap A(D) \neq \emptyset$.  The \textit{ line digraph} of $D$, denoted by $L(D)$, is the digraph such that $V(L(D)) = A(D)$, and (($u$,$v$),($w$,$z$)) $\in$ $A(L(D))$ if and only if $v$ = $w$. 

Let $D$ be a digraph. The \textit{subdivision digraph of $D$}, denoted by $S(D)$, the \textit{root digraph of $D$}, denoted by $R(D)$, the \textit{middle digraph of $D$}, denoted by $Q(D)$, and the \textit{total digraph of $D$}, denoted by $T(D)$, are defined as follows.
$$V(S(D))=V(R(D))=V(Q(D))=V(T(D))=V(D)\cup A(D).$$
And for every vertex $x$ in $V(D)\cup A(D)$.
\begin{equation*}
N_{S(D)}^{+}(x)= \left\lbrace
\begin{aligned}
\{ (u,v)\in A(D) : u=x \} & \text{\, \,  if\, \,  } x \in V(D)\\
\{ v \} & \text{\, \,  if \, \, } x=(u,v) \text{\, \, for some\, \, } (u,v) \in A(D)
\end{aligned}
\right.
\end{equation*}

\begin{equation*}
N_{R(D)}^{+}(x)= \left\lbrace
\begin{aligned}
\{ (u,v)\in A(D) : u=x \} \cup N^{+}_{D}(x) & \text{\, \,  if\, \,  } x \in V(D)\\
\{ v \} & \text{\, \,  if \, \, } x=(u,v) \text{\, \, for some\, \, } (u,v) \in A(D)
\end{aligned}
\right.
\end{equation*}

\begin{equation*}
N_{Q(D)}^{+}(x)= \left\lbrace
\begin{aligned}
\{ (u,v)\in A(D) : u=x \} & \text{\, \,  if\, \,  } x \in V(D)\\
\{ v \} \cup \{ (v, y ) \in A(D) : y \in V(D) \} & \text{\, \,  if \, \, } x=(u,v) \text{\, \, for some\, \, } (u,v) \in A(D)
\end{aligned}
\right.
\end{equation*}
 
\begin{equation*}
N_{T(D)}^{+}(x)= \left\lbrace
\begin{aligned}
\{ (u,v)\in A(D) : u=x \}  \cup N^{+}_{D}(x) & \text{\, \,  if\, \,  } x \in V(D)\\
\{ v \} \cup \{ (v, y ) \in A(D) : y \in V(D) \} & \text{\, \,  if \, \, } x=(u,v) \text{\, \, for some\, \, } (u,v) \in A(D)
\end{aligned}
\right.
\end{equation*}

Notice that $Q(D)$ is a spanning subdigraph of $T(D)$ and $T(D) \langle A(D) \rangle$ is the line digraph of $D$. It is straightforward to see that $S(D)$ and $R(D)$ have no in-dominating vertex if $D$ has at least one arc. 

The \textit{Cartesian product} of two digraphs $D$ and $H$, denoted by $D \square H$, is the digraph whose vertex set  is $V(D) \times V(H)$ and $((x,z),(u,v)) $ is an arc of $D \square H$ if and only if either $x=u$ and $(z,v)\in A(H)$ or $ z=v$ and $(x,u) \in A(D)$. The \textit{ horizontal level} of the vertex $x_{0}$ in $D\square H$ is $H_{x_{0}}=\{ (x_{0}, y) \in V( D \square H ) \, : \, y \in V(H)\}$ and the \textit{ vertical level} of the vertex $y_{0}$ is $D_{y_{0}}=\{ (x, y_{0}) \in V( D \square H ) \, : \, x \in V(D)\}$. Notice that $(D \square H)\langle H_{x_{0}}\rangle \cong H$ and $(D \square H)\langle D_{z_{0}} \rangle \cong D$ for every $x_{0}$ in $V(D)$ and every $z_{0}$ in $V(H)$, respectively.

 Let $D$ a digraph and $\alpha =(D_{v})_{v\in V(D)}$ be a sequence of digraphs which are pairwise vertex disjoint. The \textit{ composition} of $D$ respect to $\alpha$, denoted by $D[\alpha]$, is the digraph obtained from $D$ replacing every vertex $v$ of $D$ by the digraph $D_{v}$ and joining every vertex from $V(D_{v})$ to every vertex in $V(D_{u})$ whenever $(v,u) \in A(D)$.

\begin{rmk}  
\label{remark6}
For every $v$ in  $V(D)$, $D[\alpha] \langle V(D_{v})\rangle=D_{v}$. If $x_{v}$ is an arbitrary vertex in $D_{v}$, then $D[\alpha]\langle \{ x_{v} : v\in V(D) \} \rangle \cong D$.
\end{rmk}   

For a digraph $D$, the \textit{underlying graph of $D$}, denoted by $UG(D)$, is the graph such that $V(UG(D))=V(D)$ and $uv$ is an edge in $UG(D)$ if either $(u,v) \in A(D)$ or $(v,u) \in A(D)$. A digraph is \textit{planar} if $UG(D)$ is a planar graph.

The following results will be useful. 

\begin{lemma}
\label{Pevios Lema3}\cite{75}
A digraph is strong if and only if it has a closed spanning walk.
\end{lemma}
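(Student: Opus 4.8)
The plan is to prove the two implications separately, since both follow directly from the definition of strong connectivity and from the segment and concatenation notation already fixed in the paper.

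For the sufficiency (closed spanning walk $\Rightarrow$ strong), I would fix a closed spanning walk $W=(x_0,x_1,\dots,x_n)$ with $x_0=x_n$, and take an arbitrary ordered pair of vertices $u,v$. Since $W$ is spanning, both occur on $W$, say $u=x_i$ and $v=x_j$, and without loss of generality $i\le j$. Then the sub-walk $(x_i,W,x_j)$ is a $uv$-walk. For the opposite direction I would exploit that $W$ is closed: the concatenation $(x_j,W,x_n)\cup(x_0,W,x_i)$ is well defined because the terminal vertex $x_n$ of the first piece equals the initial vertex $x_0$ of the second, and it is a $vu$-walk. Hence every ordered pair is joined by a walk in each direction, so $D$ is strong.

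For the necessity (strong $\Rightarrow$ closed spanning walk), I would enumerate the vertices as $V(D)=\{v_1,\dots,v_k\}$ in any fixed order. Strong connectivity furnishes, for each $i\in\{1,\dots,k-1\}$, a $v_iv_{i+1}$-walk $W_i$, together with a $v_kv_1$-walk $W_k$. The concatenation $W_1\cup W_2\cup\cdots\cup W_k$ is legitimate, since each piece ends where the next begins, and it yields a single closed walk rooted at $v_1$ that returns to $v_1$ and passes through every $v_i$; it is therefore a closed spanning walk.

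I do not expect a genuine obstacle here; the only points requiring care are bookkeeping. One must check that each invocation of the concatenation operator $\cup$ is admissible under its definition (matching endpoints), and observe that repeated occurrences of vertices along the walks are harmless, since spanning only demands that each vertex appear at least once on the closed walk.
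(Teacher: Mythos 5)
Your proof is correct, and both directions are the standard argument: extract sub-walks (using the closed endpoint to wrap around) for sufficiency, and concatenate $v_iv_{i+1}$-walks through an enumeration of $V(D)$ for necessity. The paper itself gives no proof of this lemma — it is quoted from the cited reference \cite{75} — so there is nothing to compare against beyond noting that your argument is the expected one and handles the endpoint-matching for each use of $\cup$ correctly.
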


\begin{lemma}\cite{9}
\label{remark3}
Let $D$ be a nontrivial digraph without isolated vertex, with at least one arc, and $L(D)$ its line digraph. $L(D)$ is strong if and only if $D$ is strong.
\end{lemma}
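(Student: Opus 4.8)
The plan is to exploit the natural correspondence between walks in $D$ and walks in $L(D)$. Recall that the vertices of $L(D)$ are precisely the arcs of $D$, and $((p,q),(r,s))$ is an arc of $L(D)$ exactly when $q=r$. Hence a walk $(a_0, a_1, \dots, a_k)$ in $L(D)$, writing $a_i=(p_i,q_i)$, satisfies $q_i=p_{i+1}$ for every $i$, so the sequence $(p_0, q_0, q_1, \dots, q_k)$ is a walk in $D$ from $p_0$ to $q_k$; conversely, any walk $(v_0, v_1, \dots, v_n)$ of length $n\geq 1$ in $D$ yields the walk $((v_0,v_1),(v_1,v_2),\dots,(v_{n-1},v_n))$ in $L(D)$. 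This translation between walks is the engine of both implications, and I would set it up as a preliminary observation.

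For the implication that $D$ strong implies $L(D)$ strong, I would take two arbitrary vertices $a=(u,v)$ and $b=(w,z)$ of $L(D)$ and construct an $ab$-walk. Since $D$ is strong there is a $vw$-walk $(v=y_0, y_1, \dots, y_m=w)$ in $D$. If $m\geq 1$, lifting this walk and prepending $a$, appending $b$, produces the $L(D)$-walk $(a,(v,y_1),(y_1,y_2),\dots,(y_{m-1},w),b)$, each consecutive pair being a genuine arc of $L(D)$ because head and tail coincide; if $m=0$, then $v=w$ and $(a,b)$ is already an arc of $L(D)$. In either case an $ab$-walk exists, so $L(D)$ is strong.

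For the converse I would first use the hypotheses to show that every vertex of $D$ is both the head and the tail of some arc. Assuming $D$ has at least two arcs, so that $L(D)$ has at least two vertices, strong connectivity of $L(D)$ forces every vertex of $L(D)$ to have positive in- and out-degree; translated back to $D$, the vertex $(u,v)$ having an out-neighbor in $L(D)$ means $v$ has an out-arc in $D$, and having an in-neighbor means $u$ has an in-arc. Combining this with the no-isolated-vertex hypothesis (each vertex lies on some arc) yields that every vertex of $D$ has both an in-arc and an out-arc. Then, given $x,y\in V(D)$, I would pick an out-arc $a=(x,x')$ and an in-arc $b=(y',y)$, use strong connectivity of $L(D)$ to find an $ab$-walk, and project it down via the correspondence above to obtain an $xy$-walk in $D$; thus $D$ is strong.

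The main obstacle I anticipate is the bookkeeping in the converse: establishing cleanly that no isolated vertices together with strong connectivity of $L(D)$ forces every vertex of $D$ to carry both an in-arc and an out-arc, and isolating the degenerate case in which $L(D)$ reduces to a single vertex (equivalently, $D$ has exactly one arc), which must be treated separately in line with the convention that a one-vertex digraph counts as strong. The forward direction is essentially immediate once the walk correspondence is in place; the real content lies in that degree argument.
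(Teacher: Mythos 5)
The paper gives no proof of this lemma at all: it is quoted as a known result of Aigner \cite{9}, so there is no in-paper argument to compare yours against. Judged on its own, your proof is the standard one and is essentially complete and correct. The walk correspondence is set up properly; the forward direction, including the case $v=w$, is clean; and in the converse the key step --- combining strong connectivity of $L(D)$ (every vertex of $L(D)$ has positive in- and out-degree once $L(D)$ has at least two vertices) with the no-isolated-vertex hypothesis to conclude that every vertex of $D$ has both an in-arc and an out-arc --- is exactly what is needed before projecting an $ab$-walk, $a=(x,x')$ and $b=(y',y)$, down to an $xy$-walk in $D$.

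One remark on the degenerate case you flag at the end. If $D$ has exactly one arc, then (being loopless, nontrivial and without isolated vertices) $D$ is two vertices joined by a single arc; $L(D)$ is a single vertex, hence strong under the paper's definition, while $D$ is not strong. So this case is not a piece of bookkeeping that a separate argument can absorb --- it is a counterexample to the statement as literally written. The defect lies in the statement's hypotheses (one should exclude $|A(D)|=1$, or equivalently assume $\delta^{+}(v)\geq 1$ and $\delta^{-}(v)\geq 1$ for every vertex, as in Aigner's setting), not in your argument, which is sound whenever $L(D)$ has at least two vertices. It would strengthen your write-up to say this explicitly rather than leaving the degenerate case as an anticipated obstacle.
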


\begin{lemma}\cite{99}
\label{Previos Obs 1}
If $D$ and $H$ are vertex disjoint digraphs, then $D\square H$ is strong if and only if $D$ and $H$ are strong. 
\end{lemma}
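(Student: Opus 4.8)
The plan is to prove both implications directly from the definition of strongness, namely that between every ordered pair of vertices there is a directed walk. Throughout I distinguish the two kinds of arcs of $D\square H$: an arc $((x,z),(u,v))$ is \emph{horizontal} when $z=v$ and $(x,u)\in A(D)$, and \emph{vertical} when $x=u$ and $(z,v)\in A(H)$. The key structural fact I will exploit is that a horizontal arc moves only the first coordinate (along an arc of $D$) while fixing the second, and a vertical arc does the reverse; this lets me ``lift'' walks from a factor into the product and ``project'' walks of the product back down onto a factor.

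For the sufficiency ($D$ and $H$ strong implies $D\square H$ strong), I would fix two vertices $(x,z)$ and $(u,v)$ of $D\square H$. Since $D$ is strong, there is an $xu$-walk $x=x_0,x_1,\dots,x_k=u$ in $D$; keeping the second coordinate equal to $z$ turns this into the walk $(x_0,z),(x_1,z),\dots,(x_k,z)=(u,z)$ in $D\square H$, each step being horizontal. Since $H$ is strong, a $zv$-walk $z=z_0,\dots,z_m=v$ in $H$ similarly lifts to the vertical walk $(u,z_0),\dots,(u,z_m)=(u,v)$. Concatenating the two gives an $(x,z)(u,v)$-walk, and as the chosen vertices were arbitrary, $D\square H$ is strong. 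One may instead invoke Lemma \ref{Pevios Lema3} after assembling a closed spanning walk of the product, but the direct lift above is shorter.

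For the necessity ($D\square H$ strong implies $D$ and $H$ strong) I argue symmetrically for the two factors, so it suffices to treat $D$. Fixing an arbitrary $z\in V(H)$ (nonempty, else the product is empty) and arbitrary $x,u\in V(D)$, strongness of $D\square H$ supplies a walk $(x,z)=(w_0,y_0),(w_1,y_1),\dots,(w_n,y_n)=(u,z)$. I then project onto the first coordinate: each step is either vertical, in which case $w_i=w_{i+1}$, or horizontal, in which case $(w_i,w_{i+1})\in A(D)$. Hence $w_0,w_1,\dots,w_n$ becomes a walk in $D$ once the consecutive repetitions coming from vertical steps are deleted, yielding an $xu$-walk in $D$ and establishing that $D$ is strong.

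The argument is essentially routine, and the only point that needs care is this projection step: one must check that every arc of $D\square H$ projects either to a single vertex or to an honest arc of $D$ (never to a non-arc), which is exactly what the two-case definition of the product guarantees, and that collapsing the stationary vertical steps does not break the walk. Minor bookkeeping about empty or trivial factors should also be dispatched at the outset, so that a vertex $z\in V(H)$ and vertices $x,u\in V(D)$ are available to run the two reachability arguments.
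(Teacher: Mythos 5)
Your proof is correct. Note that the paper itself offers no proof of this lemma: it is quoted from the reference of Bang-Jensen and Gutin, so there is nothing to compare against. Your lift-and-project argument is the standard one, and you correctly identify the only delicate point, namely that every arc of $D\square H$ projects onto either a fixed vertex or an arc of the corresponding factor, so that collapsing the stationary steps of the projected sequence yields an honest walk; together with the observation that the ordered pair $(x,u)$ is arbitrary (which supplies walks in both directions, as the paper's definition of strong requires), this settles both implications.
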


It is straightforward to see the following.

\begin{lemma}
\label{Previos Lema 1}
Let $D$ be a strong digraph and $(u,v)$ in $A(D)$. If there exists a $uv$-walk in $D$ which does not contain $(u,v)$, then $D - (u,v)$ is strong.
\end{lemma}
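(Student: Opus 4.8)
The plan is to show directly that any walk in $D$ can be rerouted so as to avoid the arc $(u,v)$, by substituting the given $uv$-walk $W$ for each traversal of $(u,v)$. Since $D-(u,v)$ has the same vertex set as $D$, it suffices to produce, for an arbitrary ordered pair of vertices $x,y$, an $xy$-walk in $D$ that uses no copy of the arc $(u,v)$.

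First I would fix $x,y \in V(D)$ and, using that $D$ is strong, take any $xy$-walk $P=(x=x_0,x_1,\dots,x_n=y)$ in $D$. If $P$ never traverses $(u,v)$—that is, there is no index $i$ with $(x_i,x_{i+1})=(u,v)$—then $P$ is already a walk in $D-(u,v)$ and we are done. Otherwise, whenever $(x_i,x_{i+1})=(u,v)$, the subwalk consisting of this single arc begins at $u=x_i$ and ends at $v=x_{i+1}$, so it can be replaced by $W$, which by hypothesis is a $uv$-walk not containing $(u,v)$. Concretely, I would rewrite $P$ as the concatenation $(x_0,P,x_i)\cup W \cup (x_{i+1},P,x_n)$ using the operation $\cup$ on walks introduced earlier; this is again an $xy$-walk precisely because $W$ begins at $x_i=u$ and ends at $x_{i+1}=v$.

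If $(u,v)$ appears more than once in $P$, I would apply this substitution at every such occurrence, for instance by inducting on the number of occurrences of $(u,v)$ in the walk. The resulting walk $P'$ is an $xy$-walk each of whose arcs is either an arc of $P$ different from $(u,v)$ or an arc of $W$; since $W$ avoids $(u,v)$, none of them equals $(u,v)$, and hence $P'$ is a walk in $D-(u,v)$. As $x$ and $y$ were arbitrary, $D-(u,v)$ is strong.

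The argument carries no real obstacle beyond bookkeeping; the only point that needs care is that a walk (unlike a path) may traverse $(u,v)$ several times and may repeat vertices, so the substitution must be performed at each occurrence, and one must accept that the spliced object is merely a walk, not necessarily a path—which is exactly what strong connectivity requires. Alternatively, one could shorten the write-up by invoking Lemma \ref{Pevios Lema3}: take a closed spanning walk of $D$ and perform the same substitution on it to obtain a closed spanning walk of $D-(u,v)$, whence $D-(u,v)$ is strong.
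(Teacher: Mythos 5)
Your argument is correct: splicing the given $uv$-walk $W$ in place of every traversal of the arc $(u,v)$ in an arbitrary $xy$-walk yields an $xy$-walk in $D-(u,v)$, which is exactly the intended "straightforward" argument. The paper itself states this lemma without proof (prefacing it with "It is straightforward to see the following"), so your write-up simply supplies the routine details the authors omit; either of your two variants (arbitrary $xy$-walks, or the closed spanning walk via Lemma \ref{Pevios Lema3}) is fine.
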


\begin{lemma}
\label{Previos L2}
Let $D$ be a digraph with at least one arc and $L(D)$ its line digraph. If $E \subseteq A(D)$ is nonempty, then $L(D)\langle E \rangle = L ( D [ E ] )$. 
\end{lemma}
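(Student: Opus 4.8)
The plan is to prove the equality of the two digraphs directly, by showing they have the same vertex set and the same arc set, unwinding the definitions of $L(\cdot)$, of the subdigraph $D\langle \cdot \rangle$ induced by a vertex set, and of the subdigraph $D[\cdot]$ induced by an arc set.

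First I would settle the vertex sets. Since $E \subseteq A(D) = V(L(D))$, the set $E$ is a set of vertices of $L(D)$, and by definition of the induced subdigraph we have $V(L(D)\langle E \rangle) = E$. On the other side, the definition of the subdigraph induced by an arc set gives $A(D[E]) = E$, so applying the line digraph construction yields $V(L(D[E])) = A(D[E]) = E$. Hence both digraphs have vertex set exactly $E$. The one point to keep in mind here is that $V(D[E])$ may be a proper subset of $V(D)$ (it consists only of the end-vertices of arcs in $E$), but this is harmless: the line digraph depends solely on the arc set, which equals $E$ in both constructions.

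Next I would compare the arc sets. By definition of the induced subdigraph, an arc of $L(D)\langle E \rangle$ is an arc of $L(D)$ both of whose end-vertices lie in $E$; by definition of $L(D)$, such an arc has the form $((u,v),(w,z))$ with $v=w$, and the membership condition forces $(u,v),(w,z) \in E$. An arc of $L(D[E])$ is, by the definition of the line digraph applied to $D[E]$, a pair $((u,v),(w,z))$ with $(u,v),(w,z) \in A(D[E]) = E$ and $v=w$. These two descriptions are identical, so the arc sets coincide, and the equality $L(D)\langle E \rangle = L(D[E])$ follows.

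I do not expect a genuine obstacle, since the argument is an unwinding of definitions; the only care required is to observe that the adjacency condition defining a line digraph (the head of the first arc equaling the tail of the second) is an intrinsic property of the ordered pair of arcs and is unaffected by whether we view those arcs inside $D$ or inside $D[E]$.
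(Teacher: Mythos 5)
Your proof is correct; the paper states this lemma without proof (introducing it with ``It is straightforward to see the following''), and your direct unwinding of the definitions—both digraphs have vertex set $E$ and both have as arcs exactly the pairs $((u,v),(w,z))$ with $v=w$ and both arcs in $E$—is precisely the intended routine verification.
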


\section{First results}

In this section we  prove some properties digraphs with at least one strong in-domatic partition.

\begin{theorem}
\label{First Results Prop 1}
Let $D$ be a digraph. $D$ has a strong in-domatic partition if and only if $D$ is strong.
\end{theorem}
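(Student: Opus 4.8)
The plan is to prove the two implications separately; essentially all of the content lies in the ``only if'' direction, while the converse is immediate.

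For the ``if'' direction, assume $D$ is strong. I would simply take the one-block partition $\mathfrak{S}=\{V(D)\}$. Its single class $S=V(D)$ is vacuously an in-dominating set, since $V(D)\setminus S=\emptyset$ imposes no condition, and $D\langle S\rangle=D$ is strong by hypothesis. Hence $\{V(D)\}$ is a strong in-domatic partition, so $D$ has one.

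For the ``only if'' direction, suppose $\mathfrak{S}$ is a strong in-domatic partition of $V(D)$. I would prove strong connectivity directly by producing an $xv$-walk for every ordered pair of vertices $x,v\in V(D)$. Let $S$ be the class of $\mathfrak{S}$ containing $v$. The argument splits into two cases. If $x\in S$, then since $D\langle S\rangle$ is strong there is an $xv$-walk inside $D\langle S\rangle$, which is a walk in $D$. If $x\notin S$, then because $S$ is an in-dominating set, $x$ is in-dominated by $S$, i.e. there is a vertex $z\in S$ with $(x,z)\in A(D)$; concatenating the arc $(x,z)$ with a $zv$-walk inside the strong subdigraph $D\langle S\rangle$ yields an $xv$-walk. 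Since $x$ and $v$ were arbitrary, every ordered pair of vertices is joined by a walk, so $D$ is strong. (Alternatively, one could assemble such walks into a closed spanning walk and invoke Lemma~\ref{Pevios Lema3}, but the pairwise construction is more direct.)

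The two ingredients that make the converse work are exactly the two defining conditions of a strong in-dominating set, and the point to get right is their interaction: strong connectivity of each $D\langle S\rangle$ handles movement \emph{within} a class, while the in-domination condition supplies a single arc from any outside vertex \emph{into} the target class. The only mild subtlety is the bookkeeping of the two cases $x\in S$ and $x\notin S$, together with noting that the ``landing'' vertex $z$ furnished by in-domination need not equal $v$, so strong connectivity of $D\langle S\rangle$ is genuinely needed to complete the walk at $v$. I do not expect any deeper obstacle; in particular, no use of the structure of the other classes is required.
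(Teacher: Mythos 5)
Your proposal is correct and follows essentially the same argument as the paper: the one-block partition $\{V(D)\}$ for the easy direction, and for the converse the case split on whether the source vertex lies in the class of the target, using in-domination to enter that class and its strong connectivity to finish the walk. No further comment is needed.
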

\begin{proof}
Notice that if $D$ is strong, then $\{ V(D) \}$ is a strong in-domatic partition of $V(D)$. 

For the converse, let  $ \mathfrak{ S} = \{ S_{1} ,  \ldots, S_{k} \} $ be a strong in-domatic partition of $D$ and $\{u, v\}$ a subset of $V (D) $. If $\{ u, v \} \subseteq S_{i} $ for some $i$ in $\{1,\ldots , k \}$, then there exists a $uv$-walk contained in $\langle S_{i}\rangle$. If $u \in S_{j}$ and $v \in S_{i} $ with $i \neq j $, since $S_{i}$ is an in-dominating set, there exists $x$ in $S_{i}$ such that $(u, x) \in A(D)$. On the other hand, since $\langle S_{i}\rangle $ is strong, there exists a $x v$-walk contained in  $\langle S_{i}\rangle$, say $C' $, so $C=(u, x) \cup C'$ is a $uv$-walk in $D$, concluding that $D$ is strong.
\end{proof}

It follows from Theorem \ref{First Results Prop 1} that we will consider only strongly connected digraphs. On the other hand, despite determinate whether or not a graph has a domatic partition into $k$ disjoint dominating sets ($k \geq 3$) is a NP-complete problem, the following result shows that, given a strong in-domatic partition of the vertices of a digraph $D$ into $k$ disjoint sets ($1 \leq k \leq \mathsf{d}^{-}_{s}(D)$), it is straightforward to show a strong in-domatic partition into $n$ disjoint sets for every $n \in \{1, \ldots , k \}$.

\begin{proposition}
\label{First Results Lem 2}
If $D$ is a digraph and $\mathfrak{S}= \{ S_{1} , S_{2} , \dots , S_{k} \}$ is a strong in-domatic partition of $D$, then the following holds.

a) $\cup_{i \in I} S_{i}$ is a strong in-dominating set of $D$ for every nonempty subset $I$ of $\{1, \ldots , k\}$.

b)  If $I$ is a nonempty subset of $\{ 1, \dots , k \}$, then $\mathfrak{ S}_{I} = \{ S_{t} \, | \, t \notin I \} \cup \{ \cup_{t \in I} S_{t} \} $ is a strong in-domatic partition of $V(D)$.

c)  For every $n$ in $\{1,\ldots , \mathsf{d}_{s}^{-}(D)\}$  there exists a strong in-domatic partition of $V(D)$, say $\mathfrak{S}$, such that $|\mathfrak{S}|=n$.
\end{proposition}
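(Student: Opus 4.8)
The plan is to prove the three parts in order, since (b) follows quickly from (a), and (c) follows from (a) together with (b). The heart of the argument is part (a), and within it the verification that $D\langle \cup_{i \in I} S_{i} \rangle$ is strongly connected; the in-domination property is nearly immediate.

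First I would prove (a). Write $W = \cup_{i \in I} S_{i}$. For the in-domination property, take any $v \in V(D) \setminus W$ and fix any index $j \in I$ (possible since $I$ is nonempty). Since $v \notin S_{j}$ and $S_{j}$ is an in-dominating set, there is $z \in S_{j} \subseteq W$ with $(v,z) \in A(D)$, so $v$ is in-dominated by $W$. For strong connectivity I would mimic the argument in Theorem \ref{First Results Prop 1}: given $u, v \in W$ with $u \in S_{a}$ and $v \in S_{b}$ for some $a, b \in I$, if $a = b$ the strongness of $D\langle S_{a} \rangle$ already yields a $uv$-walk lying inside $W$; if $a \neq b$, I use that $S_{b}$ is in-dominating and $u \notin S_{b}$ to obtain an arc $(u,x)$ with $x \in S_{b}$, then concatenate it with an $xv$-walk inside the strong digraph $D\langle S_{b} \rangle$. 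Since the arc $(u,x)$ has both end-vertices in $W$ and the second walk lies in $D\langle S_{b} \rangle \subseteq D\langle W \rangle$, the resulting $uv$-walk lies in $D\langle W \rangle$, so $W$ induces a strong subdigraph.

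Next, part (b) is essentially bookkeeping. The collection $\mathfrak{S}_{I}$ is a partition of $V(D)$ because it only merges the classes indexed by $I$ into the single class $\cup_{t \in I} S_{t}$, leaving the classes $S_{t}$ with $t \notin I$ untouched; thus the members remain pairwise disjoint and their union is still $V(D)$. Each surviving class $S_{t}$ with $t \notin I$ is already a strong in-dominating set by hypothesis, and the merged class is one by part (a), so $\mathfrak{S}_{I}$ is a strong in-domatic partition of $V(D)$.

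Finally, for (c), let $k = \mathsf{d}_{s}^{-}(D)$ and fix a $\mathsf{d}_{s}^{-}$-partition $\{S_{1}, \ldots, S_{k}\}$. Given $n \in \{1, \ldots, k\}$, I would apply part (b) with $I = \{n, n+1, \ldots, k\}$; the resulting partition keeps $S_{1}, \ldots, S_{n-1}$ and adjoins the single merged class $\cup_{t \in I} S_{t}$, giving exactly $(n-1)+1 = n$ classes. The extreme case $n = 1$ produces $\{V(D)\}$, which is a strong in-domatic partition since $D$ is strong by Theorem \ref{First Results Prop 1}. I expect the only delicate point to be the strong-connectivity verification in (a), but it reduces cleanly to the bridging technique already used there: the in-domination of each class supplies the arc needed to enter a strong component, and strongness inside that component completes the walk.
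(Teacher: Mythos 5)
Your proposal is correct and follows essentially the same route as the paper: in-domination of the union comes from any single class it contains, and strong connectivity of $D\langle \cup_{i\in I} S_{i}\rangle$ comes from the bridging argument of Theorem \ref{First Results Prop 1} (the paper invokes that theorem applied to the restriction of $\mathfrak{S}$ to the induced subdigraph, whereas you inline its proof, which is the same computation). Parts (b) and (c) match the paper's proof exactly, including the choice $I=\{n,\ldots,k\}$.
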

\begin{proof}
$a)$ Let $I$ be a nonempty subset of $\{1, \ldots , k\}$. Suppose that $I=\{ \alpha_{1} , \ldots , \alpha_{r} \}$ and $S = \cup_{i \in I} S_{i}$. Notice that $\{ S_{\alpha_{1}}, \ldots , S_{\alpha_{r}} \}$ is a in-domatic partition of $V(\langle S \rangle)$ and by Theorem \ref{First Results Prop 1}, $\langle S \rangle$ is strong. On the other hand, since $S_{\alpha_{1}}$ is an in-dominating set of $D$ and $S_{\alpha_{1}} \subseteq S$, then $S$ is also an in-dominating set of $D$, concluding that $S$ is a strong in-dominating set.

$b)$ Since $\mathfrak{S}$ is a partition of $V(D)$, then $\mathfrak{S}_{I}$ is also a partition of $V(D)$. According to $(a)$, every $S \in \mathfrak{S}_{I}$ is a strong in-domatic set of $D$.

$c)$ Consider $\mathfrak{S}= \{ S_{1} , S_{2} , \dots , S_{k} \}$ a $\mathsf{d}_{s}^{-}$-partition of $D$. If  $I = \{ n , n+1 , \dots , k\}$, according to $(b)$, then $\mathfrak{S}_{I}= \{ S_{t} \, | \, t \notin I \} \cup \{ \cup_{t \in I} S_{t} \}$ is a strong in-domatic partition of $D$ such that $|\mathfrak{S} |= n$.
\end{proof}

\subsection{Some bounds}

Zelinka \cite{6} showed that the vertex connectivity number of a graph is an upper bound for the connected domatic number. In the same spirit, we have the following upper bound.

\begin{proposition}
If $D$ is a non-semicomplete strong digraph, then $\mathsf{d}_{s}^{-}(D) \leq \kappa (UG(D))$.
\end{proposition}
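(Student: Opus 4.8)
The plan is to adapt Zelinka's argument bounding the connected domatic number by the vertex connectivity to the directed, in-dominating setting. Since $D$ is strong, $UG(D)$ is connected, and since $D$ is non-semicomplete, $UG(D)$ is not complete; hence $UG(D)$ genuinely admits a vertex-cut and $\kappa(UG(D))$ is attained by a $\kappa$-set. By Theorem \ref{First Results Prop 1}, $D$ has at least one strong in-domatic partition, so $\mathsf{d}_{s}^{-}(D)$ is well defined. I would fix a $\mathsf{d}_{s}^{-}$-partition $\mathfrak{S}=\{S_{1},\dots ,S_{k}\}$ of $V(D)$ with $k=\mathsf{d}_{s}^{-}(D)$, together with a $\kappa$-set $S^{*}$ of $UG(D)$, and write $C_{1},\dots ,C_{m}$ (with $m\geq 2$) for the components of $UG(D)-S^{*}$.

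The heart of the argument is the claim that every class $S_{i}$ meets $S^{*}$. Suppose, to the contrary, that $S_{i}\cap S^{*}=\emptyset$ for some $i$. Because $S_{i}$ is a strong in-dominating set, $D\langle S_{i}\rangle$ is strong, so its underlying graph---which is exactly the subgraph of $UG(D)$ induced by $S_{i}$---is connected. A connected subgraph of $UG(D)$ that avoids $S^{*}$ must lie inside a single component of $UG(D)-S^{*}$, say $S_{i}\subseteq C_{1}$. Now pick any vertex $w$ in a different component, say $w\in C_{2}$; then $w\notin S_{i}$, and since $S_{i}$ is an in-dominating set there is some $z\in S_{i}$ with $(w,z)\in A(D)$. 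This arc yields an edge $wz$ of $UG(D)$ joining $C_{2}$ to $C_{1}$ with neither endpoint in $S^{*}$, contradicting that $C_{1}$ and $C_{2}$ are distinct components of $UG(D)-S^{*}$. Hence each $S_{i}$ intersects $S^{*}$.

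Since the classes $S_{1},\dots ,S_{k}$ are pairwise disjoint and each contains at least one vertex of $S^{*}$, I conclude that $k\leq |S^{*}|=\kappa(UG(D))$, which is the desired bound. The step that needs the most care is the transfer of connectivity: one must observe that the underlying graph of the strongly connected subdigraph $D\langle S_{i}\rangle$ is exactly the subgraph of $UG(D)$ induced by $S_{i}$, so that strong connectivity of $D\langle S_{i}\rangle$ forces $S_{i}$ to lie in a single component, and one must exploit the in-dominating direction (an out-neighbor of $w$ lying in $S_{i}$) to manufacture the forbidden crossing edge. The non-semicomplete hypothesis is used precisely to guarantee the existence of a vertex-cut, since otherwise $UG(D)$ is complete and the $\kappa$-set employed here would not exist.
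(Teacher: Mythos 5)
Your proof is correct and follows essentially the same route as the paper: both fix a $\kappa$-set and a $\mathsf{d}_{s}^{-}$-partition, argue that a class disjoint from the cut would be confined to one component of $UG(D)$ minus the cut by strong connectivity, and then derive a contradiction from the in-domination of a vertex in another component. Your version merely spells out the contradiction as a forbidden crossing edge, where the paper states it as a failure of in-domination; the content is identical.
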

\begin{proof}
Let $U$ be a  vertex-cut of $UG(D)$ and $S$ a strong in-dominating set in $D$, we will show that $S\cap U \neq \emptyset$. Suppose that $S\cap U=\emptyset$. Since $D\langle S \rangle$ is strong and $UG(D)-U$ is disconnected, then $S \subseteq V(H)$ for some connected component $H$ of $UG(D)- U$. It follows that no vertex in $V(UG(D)- U)\setminus V(H)$ is in-dominated by $S$ in $D$, which is no possible. Hence $S\cap U \neq \emptyset$.

Let $U$ be a $\kappa $-set in $UG(D)$ and $\mathfrak{S}=\{ S_{1}, \ldots , S_{t}\}$ a $\mathsf{d}_{s}^{-}$-partition of $V(D)$. It follows that $S_{i} \cap U \neq \emptyset$ for every $i$ in $\{ 1, \ldots , t \}$. Hence, $\mathsf{d}_{s}^{-}(D) \leq \kappa (UG(D))$.
\end{proof}

Zelinka \cite{3} proved  that every digraph $D$  holds $\mathsf{d}^{-}(D) \leq \delta^{+}(D)+1$. Corollary \ref{First Results Lem 2} is a consequence of this last result.

\begin{corollary}
\label{First Results Prop 2}
If $D$ is a nontrivial strong digraph, then $\mathsf{d}_{s}^{-}(D) \leq \delta^{+}(D)+1$.
\end{corollary}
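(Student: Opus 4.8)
The plan is to deduce this bound directly from Zelinka's inequality $\mathsf{d}^{-}(D) \leq \delta^{+}(D)+1$ quoted just above, by observing that a strong in-domatic partition is, in particular, an ordinary in-domatic partition. First I would recall that by definition a strong in-dominating set is an in-dominating set: the requirement that $D\langle S\rangle$ be strong is an \emph{additional} condition imposed on top of the in-domination property. Consequently, if $\mathfrak{S}=\{S_{1},\ldots,S_{t}\}$ is any strong in-domatic partition of $V(D)$, then each $S_{i}$ is an in-dominating set, so $\mathfrak{S}$ is also an in-domatic partition of $V(D)$. Taking $\mathfrak{S}$ to be a $\mathsf{d}_{s}^{-}$-partition, so that $t=\mathsf{d}_{s}^{-}(D)$, this shows that $D$ admits an in-domatic partition into $\mathsf{d}_{s}^{-}(D)$ classes, whence $\mathsf{d}_{s}^{-}(D)\leq \mathsf{d}^{-}(D)$.

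Then I would finish by chaining this inequality with the cited bound, obtaining $\mathsf{d}_{s}^{-}(D)\leq \mathsf{d}^{-}(D)\leq \delta^{+}(D)+1$. Since $D$ is nontrivial and strong it has at least one arc, so $\delta^{+}(D)\geq 1$ and the statement is non-vacuous. For completeness one could instead reprove the inequality by the one-line counting argument that underlies Zelinka's result, making the corollary self-contained: fix a vertex $v$ with $\delta_{D}^{+}(v)=\delta^{+}(D)$ and let $\mathfrak{S}=\{S_{1},\ldots,S_{t}\}$ be a $\mathsf{d}_{s}^{-}$-partition with $v\in S_{j}$; for each $i\neq j$ the class $S_{i}$ is in-dominating and $v\notin S_{i}$, so $N^{+}(v)\cap S_{i}\neq\emptyset$. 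These $t-1$ intersections belong to pairwise disjoint classes, hence $|N^{+}(v)|\geq t-1$, i.e.\ $\delta^{+}(D)\geq \mathsf{d}_{s}^{-}(D)-1$.

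This proof presents no genuine obstacle: the whole content is the observation that discarding the strong-connectivity condition converts a strong in-domatic partition into an in-domatic one, after which the claim is an immediate corollary of the quoted bound. The only point requiring a word of care (in the self-contained version) is the disjointness of the sets $N^{+}(v)\cap S_{i}$, which holds precisely because $\mathfrak{S}$ is a partition of $V(D)$.
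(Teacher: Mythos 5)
Your proposal is correct and follows exactly the paper's argument: a $\mathsf{d}_{s}^{-}$-partition is in particular an in-domatic partition, so $\mathsf{d}_{s}^{-}(D)\leq \mathsf{d}^{-}(D)\leq \delta^{+}(D)+1$ by Zelinka's bound. The optional self-contained counting argument you sketch is also valid but is not needed beyond the paper's two-line deduction.
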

\begin{proof}
Let $\mathfrak{S}$ be a $\mathsf{d}_{s}^{-}$-partition of $V(D)$. Since $\mathfrak{S}$ is an in-domatic partition of $V(D)$, then $\mathsf{d}_{s}^{-}(D) \leq \mathsf{d}^{-}(D)$. Hence $\mathsf{d}_{s}^{-}(D) \leq \delta^{+}(D)+1$.
\end{proof}

\begin{rmk}
\label{First results Rmk 1}
For a complete digraph, it is straightforward to see that $\mathsf{d}_{s}^{-}(D)  = \delta^{+}(D)+1 = |V(D)|$.
\end{rmk}

The upper bound showed in Corollary \ref{First Results Prop 2} can be improved in digraphs with no isolated vertices.

\begin{proposition}
\label{First Results Prop 3}
If $D$ is a strong digraph without in-dominating vertex, then $\mathsf{d}_{s}^{-}(D) \leq \delta ^{+} (D)$.
\end{proposition}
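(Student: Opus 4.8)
The plan is to sharpen the counting argument behind Corollary \ref{First Results Prop 2}. That corollary already gives $\mathsf{d}_{s}^{-}(D) \leq \delta^{+}(D)+1$, so morally the task is to rule out equality under the hypothesis that $D$ has no in-dominating vertex. I would set $k = \mathsf{d}_{s}^{-}(D)$, fix a $\mathsf{d}_{s}^{-}$-partition $\mathfrak{S} = \{S_{1}, \ldots, S_{k}\}$, and choose a vertex $v$ realizing the minimum out-degree, i.e. $\delta_{D}^{+}(v) = \delta^{+}(D)$. Let $S_{i}$ be the class containing $v$. The aim is to show directly that $\delta_{D}^{+}(v) \geq k$ or else reach a contradiction, which yields $k \leq \delta^{+}(D)$.

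First I would record the domination contribution, exactly as in the proof of Corollary \ref{First Results Prop 2}: for each $j \neq i$ the set $S_{j}$ is an in-dominating set and $v \notin S_{j}$, so $v$ has at least one out-neighbor in $S_{j}$. Since the classes are pairwise disjoint, this furnishes $k-1$ pairwise distinct out-neighbors of $v$, all lying outside $S_{i}$.

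The new ingredient, and the step that gives the improvement by one, is to exploit strong connectedness of $D\langle S_{i}\rangle$. I would split into two cases according to $|S_{i}|$. If $|S_{i}| \geq 2$, then since $D\langle S_{i}\rangle$ is strong on at least two vertices, $v$ has an out-neighbor inside $S_{i}$ (any strong digraph on $\geq 2$ vertices has all out-degrees positive, as witnessed by the initial arc of a walk from $v$ to another vertex of $S_{i}$). This out-neighbor is distinct from the previous $k-1$, so $\delta_{D}^{+}(v) \geq 1 + (k-1) = k$, whence $k \leq \delta^{+}(D)$, as desired. If instead $|S_{i}| = 1$, then $S_{i} = \{v\}$ is a strong in-dominating set consisting of a single vertex, which means precisely that $v$ is an in-dominating vertex of $D$, contradicting the hypothesis.

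I expect the only real subtlety to be this dichotomy: the hypothesis ``no in-dominating vertex'' is doing its work exactly in excluding the singleton block, while strong connectedness of a nonsingleton block is what forces the extra internal out-neighbor. I would note that the degenerate case where $D$ is trivial is automatically excluded, since the lone vertex of a one-vertex digraph is itself in-dominating. Combining the two cases gives $\mathsf{d}_{s}^{-}(D) = k \leq \delta^{+}(D)$, completing the proof.
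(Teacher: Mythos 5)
Your proposal is correct and matches the paper's proof essentially verbatim: both fix a minimum out-degree vertex $v$, obtain $k-1$ out-neighbors from the in-domination of the other classes, and use the hypothesis that $D$ has no in-dominating vertex to rule out the singleton class, so that strong connectedness of the class containing $v$ supplies the $k$-th out-neighbor. Your explicit dichotomy on $|S_i|$ is just a slightly more detailed write-up of the paper's one-line observation that $S_i$ must be a nontrivial strong set.
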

\begin{proof}
Let $\mathfrak{S}=\{ S_{1},  \ldots, S_{k} \}$ be a $\mathsf{d}_{s}^{-}$-partition of $V(D)$ and $x$ in $V(D)$ such that $\delta^{+} (x ) = \delta^{+} (D)$. For $i$ in $\{ 1, \ldots , k\}$, if $x \in S_{i}$ then $N^{+}(x)\cap S_{j} \neq \emptyset$ for  every $j$ in $\{ 1, 2, \dots , k \}$ with $j \neq i$. On the other hand, since $x$ is not an in-dominating vertex, then $S_{i}$ is a nontrivial strong set, which implies that $N^{+}(x) \cap S_{i}\neq \emptyset$. Hence, $\mathsf{d}_{s}^{-} (D) = |\mathfrak{S}| \leq \delta ^{+}(x)=\delta ^{+}(D)$.
\end{proof} 

\begin{proposition}
\label{FR.L2}
If $D$ is a strong digraph such that $\mathsf{d}_{s}^{-} (D) = \delta^{+} (D) +1 $, then $D$ has an in-dominating vertex. Moreover, every vertex of minimum out-degree is in-dominating.
\end{proposition}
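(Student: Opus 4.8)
The plan is to read this proposition as a refinement of the contrapositive of Proposition \ref{First Results Prop 3}, so the first assertion essentially comes for free. Concretely, Proposition \ref{First Results Prop 3} says that a strong digraph without an in-dominating vertex satisfies $\mathsf{d}_{s}^{-}(D)\leq\delta^{+}(D)$. Since here $\mathsf{d}_{s}^{-}(D)=\delta^{+}(D)+1>\delta^{+}(D)$, the hypothesis of that proposition must fail, and therefore $D$ has an in-dominating vertex. I would, however, prove the stronger ``moreover'' clause directly, since it subsumes the first assertion: in a finite digraph there is always a vertex attaining $\delta^{+}(D)$, so exhibiting that every such vertex is in-dominating also yields the existence statement.

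For the second part I would fix a $\mathsf{d}_{s}^{-}$-partition $\mathfrak{S}=\{S_{1},\ldots,S_{k}\}$ with $k=\mathsf{d}_{s}^{-}(D)=\delta^{+}(D)+1$, and let $x$ be an arbitrary vertex with $\delta^{+}(x)=\delta^{+}(D)=k-1$, say $x\in S_{i}$. The key is a counting argument on the out-neighbors of $x$ across the classes. For each $j\neq i$, the set $S_{j}$ is in-dominating and $x\notin S_{j}$, so $x$ has at least one out-neighbor in $S_{j}$; these out-neighbors lie in pairwise disjoint classes and are thus distinct. This already produces at least $k-1$ distinct out-neighbors of $x$ lying in $\bigcup_{j\neq i}S_{j}$. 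Since $\delta^{+}(x)=k-1$, equality must hold everywhere: $x$ has \emph{exactly} one out-neighbor in each $S_{j}$ with $j\neq i$, and consequently $N^{+}(x)\cap S_{i}=\emptyset$.

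The step I expect to be the crux is turning $N^{+}(x)\cap S_{i}=\emptyset$ into $S_{i}=\{x\}$ via strong connectivity of $D\langle S_{i}\rangle$. If $|S_{i}|\geq 2$, pick any $w\in S_{i}\setminus\{x\}$; because $D\langle S_{i}\rangle$ is strong there is an $xw$-walk inside $S_{i}$, whose first arc is an out-arc of $x$ with head in $S_{i}$, contradicting $N^{+}(x)\cap S_{i}=\emptyset$. Hence $S_{i}=\{x\}$, so $\{x\}$ is itself an in-dominating set; this means every $u\in V(D)\setminus\{x\}$ has an out-neighbor in $\{x\}$, i.e.\ $(u,x)\in A(D)$ for all such $u$, which is exactly the statement that $x$ is in-dominating. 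Since $x$ was an arbitrary vertex of minimum out-degree, this proves the moreover clause, and with it the existence of an in-dominating vertex.
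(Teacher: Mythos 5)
Your proposal is correct and follows essentially the same route as the paper: the existence claim via the contrapositive of Proposition \ref{First Results Prop 3}, and for the ``moreover'' clause the same counting of one out-neighbor of $x$ in each class it does not belong to, combined with strong connectivity of $D\langle S_{i}\rangle$ to rule out $|S_{i}|\geq 2$ (the paper derives the contradiction as $\delta^{+}(x)\geq k$ directly rather than first concluding $N^{+}(x)\cap S_{i}=\emptyset$, but the argument is the same).
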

\begin{proof}
Since $\mathsf{d}_{s}^{-}(D)>\delta^{+}(D)$, it follows from Proposition \ref{First Results Prop 3} that $D$ has an in-dominating vertex. On the other hand, consider a $\mathsf{d}_{s}$-partition of $V(D)$, say $\mathfrak{S}=\{ S_{1}, \ldots , S_{k} \}$, and $x$ in $V(D)$ such that $x$ has minimum out-degree. Let $j$ in $\{ 1, \ldots, k \}$ such that $x \in S_{j}$. Notice that $|S_{j}|=1$, otherwise $S_{j}$ is a nontrivial strong set, so $x$ has at least one out-neighbor in $S_{j}$. Since every $S_{i}$ is an in-dominating set, we conclude that $x$ has an out-neighbor in $S_{i}$ for every $i$ in $\{ 1, \ldots , k \}$ and then $\delta^{+}(x) \geq |\mathfrak{S}|$, but this is no possible, because $\delta^{+}(x)=\delta^{+}(D)$ and $|\mathfrak{S}|=\delta^{+}(D) +1$. Therefore, $S_{j}=\{ x \}$. Hence, $x$ is an in-dominating vertex.
\end{proof}  

\begin{corollary}
\label{FR.P2}
Let $D$ be a strong digraph such that  $\mathsf{d}_{s}^{-}(D)=\delta ^{+}(D) +1$ and $N_{0}$ the set of vertices of minimum out-degree. The following holds:
	\begin{enumerate}[a)]	
	\item $N_{0}$ is an in-dominating set and $D\langle N_{0} \rangle$ is a complete digraph.
	
	\item $\gamma _{cl} (UG(D))\leq |N_{0}|$.
	\end{enumerate}
\end{corollary}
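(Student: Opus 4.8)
The plan is to lean entirely on Proposition \ref{FR.L2}, which under the standing hypothesis $\mathsf{d}_{s}^{-}(D)=\delta^{+}(D)+1$ already guarantees that every vertex of minimum out-degree is an in-dominating vertex; equivalently, for each $x\in N_{0}$ and each $y\in V(D)\setminus\{x\}$ we have $(y,x)\in A(D)$. With this translation in hand both parts reduce to short arguments, so I would first record it explicitly and then treat $a)$ and $b)$ in turn. Note that $N_{0}\neq\emptyset$ since $D$ has a vertex of minimum out-degree.

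For part $a)$, I would first use the monotonicity of in-domination: fixing any $x\in N_{0}$, Proposition \ref{FR.L2} gives that $\{x\}$ is an in-dominating set, and since $\{x\}\subseteq N_{0}$, any vertex outside $N_{0}$ lies outside $\{x\}$ and hence out-dominates $x\in N_{0}$, so $N_{0}$ is an in-dominating set. To show $D\langle N_{0}\rangle$ is complete I would take an arbitrary pair $x,y\in N_{0}$ with $x\neq y$: because $x$ is in-dominating and $y\neq x$ we get $(y,x)\in A(D)$, and symmetrically $(x,y)\in A(D)$ because $y$ is in-dominating. Thus every pair of distinct vertices of $N_{0}$ is joined by a symmetric pair of arcs, so $D\langle N_{0}\rangle$ is a complete digraph.

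For part $b)$, the goal is to exhibit $N_{0}$ as a dominating clique of $UG(D)$, which immediately yields $\gamma_{cl}(UG(D))\le|N_{0}|$. That $N_{0}$ is a clique of $UG(D)$ is a direct consequence of part $a)$: the completeness of $D\langle N_{0}\rangle$ gives $xy\in E(UG(D))$ for every pair $x,y\in N_{0}$. That $N_{0}$ is a dominating set of $UG(D)$ follows from part $a)$ as well: since $N_{0}$ is an in-dominating set of $D$, each $y\notin N_{0}$ has an out-neighbor $z\in N_{0}$, so $(y,z)\in A(D)$ and therefore $yz\in E(UG(D))$.

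The only point needing care, rather than a genuine obstacle, is keeping the arc directions consistent with the definitions: an \emph{in-dominating vertex} is one to which every other vertex sends an arc, while an \emph{in-dominating set} $S$ is one such that every vertex outside $S$ has an out-neighbor in $S$. Translating these correctly into edges of the underlying graph is exactly what makes both the clique property and the domination property fall out. Since all of the substantive work is carried by Proposition \ref{FR.L2}, there is essentially no hard computational step remaining.
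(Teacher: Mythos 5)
Your proposal is correct and follows essentially the same route as the paper: both deduce from Proposition \ref{FR.L2} that every vertex of minimum out-degree is in-dominating, and then read off that $N_{0}$ is an in-dominating set inducing a complete digraph, hence a dominating clique of $UG(D)$. You simply spell out the arc-direction bookkeeping that the paper leaves implicit.
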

\begin{proof}
It follows from Proposition \ref{FR.L2} that $N_{0}$ is an in-dominating set and $\langle N_{0} \rangle$ is a complete digraph. Hence, $N_{0}$ is a dominating clique in $UG(D)$, which implies that $\gamma _{cl} (UG(D))\leq |N_{0}|$.
\end{proof}

\begin{rmk}
\label{First Results Lema 1}
If $D$ is a digraph and $H$ is a spanning strong subdigraph of $D$, then every strong in-domatic partition of $H$ is also a strong in-domatic partition of $D$. In particular, $\mathsf{d}_{s}^{-} (H) \leq\mathsf{d}_{s}^{-} (D)$.
\end{rmk}

The following proposition shows an upper and a lower bound for a particular case of spanning subdigraphs. It is worth mentioning that Proposition \ref{First Results Prop 6} will be useful in order to define strong in-domatic critical digraphs, which will be characterized in Section 3.2. 

\begin{proposition}
\label{First Results Prop 6}
Let $D$ be a digraph such that $\mathsf{d}_{s}^{-}(D) \geq 2$ and $a$ an arc in $D$. If $D-a$ is strong, then  $\mathsf{d}_{s}^{-}(D)-1 \leq \mathsf{d}_{s}^{-}(D-a) \leq \mathsf{d}_{s}^{-}(D)$.
\end{proposition}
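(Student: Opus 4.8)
The plan is to prove the two inequalities separately, the upper one being immediate and the lower one requiring a short case analysis on the deleted arc $a=(u,v)$. For the upper bound $\mathsf{d}_{s}^{-}(D-a)\le \mathsf{d}_{s}^{-}(D)$, note that deleting a single arc leaves the vertex set unchanged, so $D-a$ is a spanning subdigraph of $D$, and it is strong by hypothesis. Thus Remark \ref{First Results Lema 1} applies verbatim with $H=D-a$ and yields the inequality with no further work. For the lower bound I would fix a $\mathsf{d}_{s}^{-}$-partition $\mathfrak{S}=\{S_{1},\dots,S_{k}\}$ of $V(D)$ with $k=\mathsf{d}_{s}^{-}(D)\ge 2$, write $S_{p}\ni u$ and $S_{q}\ni v$, and produce a strong in-domatic partition of $D-a$ with at least $k-1$ classes by merging the single class that the deletion can damage with a carefully chosen second class. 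The organizing observation is that deleting $a$ can only destroy the strong connectivity of the class containing both $u$ and $v$, and can only destroy the in-domination of the class containing $v$ (and then only with respect to $u$); these situations are disjoint, giving two cases.

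In the cross case $p\ne q$, every induced subdigraph $(D-a)\langle S_{t}\rangle=D\langle S_{t}\rangle$ is still strong, and the only in-domination that can fail is that of $S_{q}$, failing precisely when $v$ was the unique out-neighbor of $u$ in $S_{q}$. If it does not fail, $\mathfrak{S}$ itself is a strong in-domatic partition of $D-a$ and we are finished with $k\ge k-1$ classes. If it fails and $k\ge 3$, I choose $m\notin\{p,q\}$ and merge $S_{q}$ with $S_{m}$: since $u\notin S_{q}\cup S_{m}$, the arc $a$ is not internal to the new class, so $(D-a)\langle S_{q}\cup S_{m}\rangle=D\langle S_{q}\cup S_{m}\rangle$ is strong by Proposition \ref{First Results Lem 2}, and $u$ regains an out-neighbor in $S_{m}$ because $S_{m}$ in-dominates $u$. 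If $k=2$, I merge the two classes into $V(D)$, which is a strong in-dominating set of $D-a$ because $D-a$ is strong, giving $k-1=1$ class.

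In the internal case $p=q$, in-domination of every class is preserved and only $(D-a)\langle S_{p}\rangle$ can fail to be strong; if it is still strong, $\mathfrak{S}$ works. Otherwise Lemma \ref{Previos Lema 1} guarantees that no $uv$-walk inside $D\langle S_{p}\rangle$ avoids $a$, so $u$ cannot reach $v$ in $D\langle S_{p}\rangle-a$. I then let $R\subseteq S_{p}$ be the set of vertices that can reach $v$ inside $D\langle S_{p}\rangle-a$; here $v\in R$ and $u\notin R$, and $R$ is a nonempty proper subset of $V(D)$. The crucial step is to find an arc entering $R$ from outside $S_{p}$: I first observe that any in-neighbor of $R$ lying in $S_{p}$ already reaches $v$ and hence belongs to $R$, so were $R$ to have no in-neighbor outside $S_{p}$ it would have no incoming arc from its complement, contradicting that $D-a$ is strong. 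This produces $c\in R$ and $z\in S_{m}$ with $m\ne p$ and $(z,c)\in A(D-a)$. Merging $S_{p}$ with this $S_{m}$, I exhibit a $uv$-walk in $D\langle S_{p}\cup S_{m}\rangle$ avoiding $a$ — leaving $u$ into $S_{m}$ (as $S_{m}$ in-dominates $u$), crossing $S_{m}$ to $z$ (as $D\langle S_{m}\rangle$ is strong), then $z\to c$, then a walk from $c$ to $v$ inside $R$ — and invoke Lemma \ref{Previos Lema 1} to conclude $(D-a)\langle S_{p}\cup S_{m}\rangle$ is strong; in-domination of the merged class is clear since it contains both $u$ and $v$. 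This again yields a strong in-domatic partition of $D-a$ with $k-1$ classes.

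The main obstacle is exactly the internal case when $D\langle S_{p}\rangle-a$ disconnects: one cannot merge $S_{p}$ with an arbitrary class, since the deleted arc stays internal to every class containing $S_{p}$, so strong connectivity of the merged induced subdigraph is not automatic from Proposition \ref{First Results Lem 2}. The device that resolves this is the reachability set $R$ combined with the global hypothesis that $D-a$ is strong: strongness forces an arc into $R$ from outside $S_{p}$, and that arc is precisely what allows a single auxiliary class to restore a closed route reaching $v$. Everything else reduces to routine verifications that merging classes preserves the partition, the in-domination, and (via Lemma \ref{Previos Lema 1} or Proposition \ref{First Results Lem 2}) the strong connectivity of each surviving class.
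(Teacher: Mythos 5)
Your argument is correct and follows essentially the same route as the paper's: the upper bound comes from Remark \ref{First Results Lema 1}, and the lower bound from a case split on whether $u$ and $v$ lie in the same class followed by merging the damaged class with a second one, where in the hard internal case you locate an arc from another class into the set of vertices of $S_{p}$ that still reach $v$ --- the paper obtains exactly this configuration as the last re-entry point of an $x_{0}v$-walk in $D-a$ rather than via your reachability-set argument, but the resulting $uv$-walk avoiding $a$ is the same. One small correction: in-domination of the merged class holds because it contains an in-dominating class such as $S_{m}$ (whose in-domination is unaffected by deleting $a$), not because it contains both $u$ and $v$.
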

\begin{proof}
 Suppose that $a=(u,v)$. Since $D-a$ is a spanning subdigraph of $D$, by Remark \ref{First Results Lema 1}, $\mathsf{d}_{s}^{-}(D-a) \leq \mathsf{d}_{s}^{-}(D)$. 
On the other hand,  consider $D'=D-a$. Notice that if $\mathsf{d}_{s}^{-}(D)=2$, then $\mathsf{d}_{s}^{-}(D)-1 \leq \mathsf{d}_{s}^{-}(D-a)$ and the first inequality holds. Suppose that $\mathsf{d}_{s}^{-}(D) \geq 3$. Let $\mathfrak{S}=\{S_{1}, S_{2}, \dots, S_{k}\}$ be a $\mathsf{d}_{s}^{-}$-partition of $D$. Consider two cases on $\{u,v\}$.

\underline{Case 1}. $u \in S_{i} $ and $v \in S_{j}$ for some $\{i,j\}$ subset of  $\{ 1, \ldots , k \} $, with $i \neq j$.

Since $r \geq 3$, we can choose an index $r$ in $ \{1,\ldots , k\} \setminus \{i,j\}$ and set  $S_{0}=S_{j} \cup S_{r}$. Consider 
$$\mathfrak{S}' = \{ S_{l} \in \mathfrak{S} \, | \, l \notin \{j,r \}\} \cup \{ S_{0}\}.$$
We claim that $\mathfrak{S}'$ is a strong in-domatic partition of $V(D')$. Let $S$ be an arbitrary element in $\mathfrak{S}'$. If $S \neq S_{0}$, then $S$ is a strong in-dominating set in $D'$. If $S=S_{0}$, since $S_{r}$ is a in-dominating set in $D'$, then $S_{0}$ is also a in-dominating set in $D'$. Moreover, given that $\{S_{j}, S_{r} \}$ is a strong in-domatic partition of $D' \langle S_{0} \rangle$, by Proposition \ref{First Results Prop 1}, $D'\langle S_{0}  \rangle$ is strong, concluding that $\mathfrak{S}'$ is a strong in-domatic partition of $D'$. Therefore $\mathsf{d}_{s}^{-}(D)-1 \leq \mathsf{d}_{s}^{-}(D-a)$.

\underline{Case 2.} $\{ u, v\} \subseteq S_{j} $ for some $j$ in $\{1, \ldots , k \}$.

Let $x_{0}$ be a vertex in $V(D) \setminus S_{j}$, notice that $x_{0} \notin \{u,v\}$ because $k\geq3$. Let $T=(x_{0},  \ldots, x_{n}=v)$ be an $x_{0}v$-walk in $D'$ and consider $\alpha = max \{ i \in \{ 0,\dots , n-1\} \, | \, x_{i} \notin S_{j} \}$. Let $S_{r} \in \mathfrak{S}$ such that $x_{\alpha} \in S_{r}$ and set $S_{0}=S_{j} \cup S_{r}$. Consider $\mathfrak{S}' = \{ S_{l} \in \mathfrak{S}\, | \, l \notin \{ j,r \} \} \cup \{ S_{0} \}.$  
We claim that $\mathfrak{S}'$ is a strong in-domatic partition of $V(D')$.  Let $S$ be an arbitrary element in $\mathfrak{S}'$. If $S \neq S_{0}$, then $S$ is a strong in-dominating set of $D'$. If $S=S_{0}$, since $S_{r}$ is a in-dominating set of $D'$, then $S_{0}$ is also a in-dominating set in $D'$. On the other hand, since $S_{r}$ is a strong in-dominating set in $D'$, there exists $x$ in $S_{r}$ such that $(u,x) \in A(D')$ and there  exists a $xx_{\alpha}$-walk in $D' \langle S_{0} \rangle$, say $T'$, concluding that $(u,x)\cup T'\cup (x_{\alpha}, T, v)$ is a $uv$-walk contained in $D\langle S_{0} \rangle -a$. Therefore, it follows from Lemma \ref{First Results Lem 2} $(a)$ and Lemma \ref{Previos Lema 1} that $D'\langle S_{0} \rangle = D\langle S_{0} \rangle -a$ is strong. Thus, $\mathfrak{S}'$ is a strong in-domatic partition of $V(D')$ and  $\mathsf{d}_{s}^{-}(D)-1 \leq \mathsf{d}_{s}^{-}(D-a)$.
\end{proof}

\subsection{Strong in-domatic critical digraphs}

In \cite{6}, Zelinka defined a conectivelly comatically critical graph as a graph $G$ such that $\mathsf{d}_{c}(G-e) < \mathsf{d}_{c}(G)$ for every edge $e$ of $G$, and the author showed a characterization of such graphs. In the same spirit, we say that a digraph $D$ is a \textit{strong in-domatic critical digraph} if for every arc $a$ of $D$, $D-a$ is strong and $\mathsf{d}_{s}^{-}(D-a) = \mathsf{d}_{s}^{-}(D)-1$. We will show a characterization of such digraphs (Theorem \ref{First Results Prop 7}) and we will give an infinite family of strong in-domatic critical digraphs (Corollary \ref{First Results Prop 8}).

\begin{theorem}
\label{First Results Prop 7}
Let $D$ be a digraph such that $\mathsf{d}_{s}^{-} (D) \geq 2$ and $D-a$ is strong for every $a$ in $A(D)$. The following are equivalent:
\begin{enumerate}[a)]
\item $D$ is a strong in-domatic critical digraph.

\item  If $\mathfrak{S} = \{ S_{1}, \ldots, S_{k} \}$ is a $d_{s}^{-}$-partition of $D$, then $\mathfrak{S}$ holds:
	\begin{enumerate}[b.1)]
	\item $D \langle S_{i} \rangle - a $ is not strong for every $i$ in $\{1,\ldots, k \}$ and  every $a$ in $A ( D\langle S_{i} \rangle )$.

	\item  $|S_{i} \cap N^{+}(u)|=1$ for every $i$ in $\{ 1, \ldots, k \}$ and every $u$ in $V(D) \setminus S_{i}$.
	\end{enumerate}

\end{enumerate}
\end{theorem}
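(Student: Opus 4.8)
The plan is to prove the two implications separately, relying throughout on Proposition~\ref{First Results Prop 6}, which guarantees that for every arc $a$ (with $D-a$ strong) one has $\mathsf{d}_{s}^{-}(D-a) \in \{\mathsf{d}_{s}^{-}(D)-1,\, \mathsf{d}_{s}^{-}(D)\}$. Consequently, criticality of $D$ is equivalent to the statement that for no arc $a$ can a strong in-domatic partition with $k=\mathsf{d}_{s}^{-}(D)$ classes survive the deletion of $a$. The whole argument then reduces to tracking, under a single arc deletion, exactly which classes lose their in-dominating property or their strong connectivity.

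For the implication (a)$\Rightarrow$(b), I would fix an arbitrary $\mathsf{d}_{s}^{-}$-partition $\mathfrak{S}=\{S_{1},\ldots,S_{k}\}$ and argue by contradiction against criticality. If (b.1) failed, some $D\langle S_{i}\rangle - a$ would be strong for an arc $a=(u,v)$ with $u,v\in S_{i}$; since $a$ lies inside $S_{i}$, its deletion affects neither $D\langle S_{\ell}\rangle$ for $\ell\neq i$ nor the in-domination of any class (no arc entering some $S_{\ell}$ from outside is removed), so $\mathfrak{S}$ would remain a strong in-domatic partition of $D-a$, forcing $\mathsf{d}_{s}^{-}(D-a)=k$, a contradiction. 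If (b.2) failed, some $u\in S_{j}$ with $j\neq i$ would have two out-neighbors $v_{1},v_{2}\in S_{i}$; deleting $a=(u,v_{1})$ leaves every $D\langle S_{\ell}\rangle$ untouched (the endpoints of $a$ lie in distinct classes) and keeps $S_{i}$ in-dominating via $v_{2}$, again contradicting criticality. I expect these two verifications to be routine once the bookkeeping is set up.

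For the implication (b)$\Rightarrow$(a), suppose $D$ were not critical, so that $\mathsf{d}_{s}^{-}(D-a)=k$ for some arc $a=(u,v)$. A strong in-domatic partition $\mathfrak{T}=\{T_{1},\ldots,T_{k}\}$ of $D-a$ with $k$ classes is, by Remark~\ref{First Results Lema 1} applied to the spanning strong subdigraph $D-a$, also a $\mathsf{d}_{s}^{-}$-partition of $D$; hence (b) applies to $\mathfrak{T}$. If $u,v$ lie in a common class $T_{i}$, then $(D-a)\langle T_{i}\rangle = D\langle T_{i}\rangle - a$ is strong, contradicting (b.1). If instead $u\in T_{j}$ and $v\in T_{i}$ with $i\neq j$, then since $T_{i}$ in-dominates in $D-a$ the vertex $u\notin T_{i}$ has an out-neighbor $v'\in T_{i}$ with $(u,v')\neq a$, so $v\neq v'$ and $|T_{i}\cap N^{+}(u)|\geq 2$ in $D$, contradicting (b.2). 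Either way we reach a contradiction, so $D$ is critical.

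The main obstacle I anticipate is the case analysis in (b)$\Rightarrow$(a) together with the correct invocation of Remark~\ref{First Results Lema 1}: one must notice that the surviving $k$-class partition of $D-a$ is automatically a $\mathsf{d}_{s}^{-}$-partition of $D$, so that hypothesis (b) is even applicable to it, and then read the two failure modes (endpoints in a common class versus endpoints in different classes) off the two parts (b.1) and (b.2). The more delicate bookkeeping in (a)$\Rightarrow$(b) is checking that deleting a cross-class arc $(u,v_{1})$ does not destroy the in-domination of the class $S_{i}$ containing $v_{1}$, which is exactly where the spare out-neighbor $v_{2}$ furnished by the failure of (b.2) is used.
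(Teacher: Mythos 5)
Your proposal is correct and follows essentially the same route as the paper's proof: both directions proceed by contradiction, with the same two-case split (arc inside a class versus arc between classes) matched to conditions (b.1) and (b.2), and the same use of Proposition \ref{First Results Prop 6} and Remark \ref{First Results Lema 1} to transfer a surviving $k$-class partition of $D-a$ back to $D$.
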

\begin{proof}
Suppose that $D$ is a strong in-domatic critical digraph and let $\mathfrak{S}=\{ S_{1}, \ldots , S_{k} \}$ be a $\mathsf{d}_{s}^{-}$-partition of $V(D)$, we claim that $\mathfrak{S}$ holds (b.1) and (b.2). Proceeding by contradiction, suppose that $\mathfrak{S}$ does not fulfill either (b.1) or (b.2). If $\mathfrak{S}$ does not hold $(b \ldotp 1)$, then there exist $i$ in $\{ 1, \ldots , k \}$ and $a$ in $A( D\langle S_{i} \rangle )$ such that $D \langle S_{i} \rangle -a $ is strong, which implies that $\mathfrak{S}$ is a strong in-domatic partition of $D-a$. Hence, $\mathsf{d}_{s}^{-} (D-a) \geq \mathsf{d}_{s}^{-} (D)$, which is a contradiction since $D$ is a strong in-domatic critical digraph. In the same way, if $\mathfrak{S}$ does not hold $(b\ldotp 2)$, then there exist $i$ in $\{ 1, \ldots , k \}$, a vertex $u$ in $V(D) \setminus S_{i}$ and $\{ x,z \}$ a subset of $S_{i}$ such that $\{ (u,x), (u,z) \} \subseteq A(D)$. In that case, $\mathfrak{S}$ is a strong in-domatic partition of $D-(u,x)$. Hence, $\mathsf{d}_{s}^{-} (D-(u,x)) \geq \mathsf{d}_{s}^{-}(D)$, a contradiction. Therefore, every $d_{s}^{-}$-partition of $D$ holds $(b \ldotp 1)$ and $(b \ldotp 2)$.

Suppose that every $\mathsf{d}_{s}^{-}$-partition of $D$ holds $(b\ldotp1)$ and $(b\ldotp 2)$, but $D$ is not a strong in-domatic critical digraph. Let $(x,z) \in A(D)$ such that $\mathsf{d}_{s}^{-}( D-(x,z) )=\mathsf{d}_{s}^{-} (D)$, and consider a  $\mathsf{d}_{s}^{-}$-partition of $D-(x,z)$, say $\mathfrak{S}= \{ S_{1}, S_{2}, \dots , S_{k} \}$. Since $\mathsf{d}_{s}^{-}(D-(x,z)) = \mathsf{d}_{s}^{-}(D)$, we have that $\mathfrak{S}$ is also a $\mathsf{d}_{s}^{-}$-partition of $D$. If $\{ x,z \}$ is a subset of $S_{i}$ for some $i\in \{ 1, \ldots , k \}$, then $D \langle S_{i} \rangle - (x,z) = (D-(x,z) ) \langle S_{i} \rangle$.  On the other hand, given that $\mathfrak{S}$ is a strong in-domatic partition of $D-(x,z)$, we have that $(D-(x,z) ) \langle S_{i} \rangle$ is strong,  which is no possible by $(b \ldotp 1)$. 
In the case $x \in S_{j}$ and $z \in S_{i}$ for some subset $\{i,j \}$ of $ \{1, \ldots, k\}$ with $i \neq j$, given that $S_{i}$ is a in-dominating set in $D-(x,z)$,  we have that there exists a vertex $w$ in $S_{i} \setminus \{ z \}$ such that $(x,w) \in A(\, D-(x,z)\,)$, a contradiction with $(b\ldotp 2)$. Therefore, $D$ is a strong in-domatic critical digraph.
\end{proof}

The following corollary shows an infinite family of strong in-domatic critical digraphs.

\begin{corollary}
\label{First Results Prop 8}
For $n$ in $\mathbb{N}$, with $n \geq 3$, there exists a strong in domatic critical digraph $D$ with order $2n$ and $\mathsf{d}_{s}^{-}(D)=n$.
\end{corollary}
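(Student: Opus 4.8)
The plan is to exhibit a single explicit family and verify it against the characterization of Theorem \ref{First Results Prop 7}. For $n\ge 3$, let $D$ have vertex set $\{a_1,\dots,a_n,b_1,\dots,b_n\}$ and arcs of three types: the symmetric pairs $(a_i,b_i),(b_i,a_i)$ for each $i$; all arcs $(a_i,a_j)$ with $i\neq j$; and all arcs $(b_i,b_j)$ with $i\neq j$. Equivalently, $D$ is isomorphic to $\overleftrightarrow{K_n}\,\square\,\overleftrightarrow{K_2}$, the Cartesian product of the complete symmetric digraph on $n$ vertices with the one on two vertices. Its order is $2n$, and it is $n$-out-regular, since each vertex sends exactly one arc inside its own pair $\{a_i,b_i\}$ and exactly one arc to each of the other $n-1$ pairs. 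First I would record the routine structural facts. The digraph is strong: the $a$-vertices induce a complete symmetric digraph, so do the $b$-vertices, and the matching $a_i\leftrightarrow b_i$ joins the two parts. Using Lemma \ref{Previos Lema 1}, $D-a$ is strong for every arc $a$: for a cross arc $(a_i,a_j)$ use the detour $a_i\to a_k\to a_j$ with $k\notin\{i,j\}$ (which exists because $n\ge 3$), and for a matching arc $(a_i,b_i)$ use $a_i\to a_j\to b_j\to b_i$. Finally, $D$ has no in-dominating vertex, because for $i\neq j$ the vertex $b_j$ has no arc to $a_i$, so $a_i$ is not in-dominating, and symmetrically for each $b_i$.

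Next I would compute $\mathsf{d}_s^-(D)=n$. For the lower bound, the partition $\{\{a_i,b_i\}:i\in\{1,\dots,n\}\}$ is a strong in-domatic partition into $n$ classes: each class induces a $2$-cycle (hence is strong) and is in-dominating, since every external $a_j$ points to $a_i$ and every external $b_j$ points to $b_i$. For the upper bound, $D$ is strong with no in-dominating vertex, so Proposition \ref{First Results Prop 3} gives $\mathsf{d}_s^-(D)\le \delta^+(D)=n$. Hence $\mathsf{d}_s^-(D)=n$.

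The heart of the argument is criticality, obtained through Theorem \ref{First Results Prop 7}. Its hypotheses hold ($\mathsf{d}_s^-(D)=n\ge 2$ and $D-a$ strong for all $a$), so it suffices to verify (b.1) and (b.2) for an \emph{arbitrary} $\mathsf{d}_s^-$-partition $\mathfrak{S}=\{S_1,\dots,S_n\}$. I would first pin down the shape of every optimal partition: since $D$ has no in-dominating vertex, no class can be a singleton, and because $\mathfrak{S}$ has $n$ classes covering $2n$ vertices, every class has size exactly $2$. A strong in-dominating set of size $2$ must induce a $2$-cycle, so each $D\langle S_i\rangle$ is a $2$-cycle, and deleting either of its arcs destroys strong connectivity; this is precisely (b.1). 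For (b.2), fix $u\in S_j$: inside $S_j$ the vertex $u$ has exactly one out-neighbor (the $2$-cycle), and by in-domination $u$ has at least one out-neighbor in each of the remaining $n-1$ classes, so $\delta^+(u)\ge 1+(n-1)=n$; since $D$ is $n$-out-regular, equality forces exactly one out-neighbor in each $S_i\neq S_j$, i.e. $|S_i\cap N^+(u)|=1$, which is (b.2). Theorem \ref{First Results Prop 7} then yields that $D$ is strong in-domatic critical.

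I expect the main obstacle to be exactly this uniformity step, since the characterization quantifies over \emph{all} optimal partitions and not just the natural one. The construction must therefore be rigid enough that the combination of $n$-out-regularity and the absence of in-dominating vertices forces every optimal partition into size-$2$ classes inducing $2$-cycles; once that rigidity is in place, (b.1) and (b.2) follow from degree counting. The remaining ingredients (strong connectivity of $D$ and of each $D-a$) are routine, with $n\ge 3$ invoked only to guarantee the length-two detours among the $a$-vertices and among the $b$-vertices.
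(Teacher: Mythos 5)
Your proof is correct, and it follows the same overall skeleton as the paper's: exhibit an explicit $n$-out-regular digraph of order $2n$ built from $n$ vertex pairs, get the lower bound from the partition into those pairs, get the upper bound from Proposition \ref{First Results Prop 3} via the absence of in-dominating vertices, and confirm criticality through Theorem \ref{First Results Prop 7}. The witness digraph differs, though: the paper orients the arcs inside $U=\{u_1,\dots,u_n\}$ and inside $V$ as transitive tournaments and adds cross arcs $(u_i,v_j)$, $(v_i,u_j)$ exactly when $i\ge j$, whereas you take the Cartesian product of the complete biorientation of $K_n$ with a digon; both are $n$-out-regular with digons $\{u_i,v_i\}$ (resp. $\{a_i,b_i\}$), so the counting works identically. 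The one place where your argument is genuinely tighter is the verification of conditions (b.1) and (b.2) for an \emph{arbitrary} optimal partition: the paper deduces that every class has size $2$ and then asserts that each class must be one of the pairs $\{u_j,v_j\}$ (which is true for its digraph, since those are its only two-element sets inducing a digon, but the step is not spelled out), concluding that the optimal partition is unique before checking (b.1) and (b.2) on it. You instead observe that any size-$2$ strong class is a digon (giving (b.1) immediately) and derive (b.2) from the out-degree count $\delta^+(u)\ge 1+(n-1)=n$ together with $n$-out-regularity, without ever identifying the classes. This is cleaner and, importantly for your construction, necessary, since your digraph has two-element digons (e.g.\ $\{a_1,a_2\}$) that are not classes of any optimal partition, so a uniqueness claim would require extra work that your degree argument sidesteps.
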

\begin{proof}
Let $n$ in $\mathbb{N}$ with $n \geq 3$, $U = \{ u_{1} , u_{2} , \dots , u_{n} \}$ and $V = \{ v_{1} , v_{2} , \dots , v_{n} \}$ disjoint sets. Consider the digraph $D$ with vertex set $V \cup U $ and the arc set given by:

$\bullet $ $(u_{i} , u_{j} ) \in A(D)$ if and only if $i < j $.

$\bullet $ $(v_{i} , v_{j} ) \in A(D)$ if and only if $i < j $.

$\bullet $ $(v_{i} , u_{j} ) \in A(D)$ if and only if $i \geq j $.

$\bullet $ $(u_{i} , v_{j} ) \in A(D)$ if and only if $i \geq j $.

We will prove that $D-a$ is strong for every $a \in A(D)$. Consider the following paths:
$$T_{1} = (u_{1}, u_{2}, \dots , u_{n}).$$ 
$$T_{2} = (v_{1}, v_{2}, \dots, v_{n}).$$ 
$$T_{3} = (v_{n}, u_{n}, v_{n-1}, u_{n-1}, \dots, u_{2}, v_{1}, u_{1}).$$
$$T_{4} = (u_{n}, v_{n} , u_{n-1}, v_{n-1}, \dots, v_{2}, u_{1}, v_{1}).$$ 
Given that $C_{1} =T_{1} \cup (u_{n}, v_{1}) \cup T_{2} \cup (v_{n}, u_{1})$ and $C_{2} = (v_{1}, v_{n}) \cup T_{3} \cup (u_{1}, u_{n}) \cup T_{4}$ are spanning closed walks in $D$ that are arc disjoint, it follows that $D-a$ is strong for every $a \in A(D)$.

On the other hand, since the partition $\mathfrak{S}_{0} = \{  \{ u_{i} , v_{i} \} :  i \in \{ 1, \ldots , n \}  \}$ is a strong in-domatic partition of $V(D)$, we have that $n \leq \mathsf{d}_{s}^{-}(D)$. Moreover, since $D$ has no dominating vertex, it follows from Proposition \ref{First Results Prop 3} that $\mathsf{d}_{s}^{-}(D) \leq \delta ^{+} (D)$, concluding that $\mathsf{d}_{s}^{-} (D) = n$ (because $\delta^{+}(u_{1})=n$).
 
Let $\mathfrak{S}= \{S_{1}, \ldots S_{n} \}$ be a $\mathsf{d}_{s}^{-}$-partition of $V(D)$. Given that $D$ has no in-dominating vertex, we conclude that for every $i$ in $\{1, \ldots, n \}$, $|S_{i}|\geq 2$. On the other hand, due to $D$ has order $2n$ and $\mathfrak{S}$ is a partition of $V(D)$ with $n$ elements, we get that for every $i$ in $\{ 1,\ldots , n \}$, $|S_{i}|=2$ . Therefore, for every $i$ in $\{ 1, \ldots , n \}$ we have proved that $S_{i} = \{ u_{j}, v_{j}\}$ for some $j$ in $\{ 1, \ldots, n\}$ and so $\mathfrak{S}= \mathfrak{S}_{0}$.

Since $\mathfrak{S}_{0}$ holds the conditions $(b \ldotp 1)$ and $(b\ldotp 2)$ of Proposition \ref{First Results Prop 8}, we have that $D$ is a strong in-domatic critical digraph.
\end{proof}

\subsection{Strong in-Domatic number in planar digraphs}

In this section, we show the version for digraphs of Theorem \ref{PlanarT1} and Theorem \ref{PlanarT2}, namely Theorem \ref{First Results Prop 9} and Theorem \ref{First Results Prop 10}.  The following result will be useful in order to show an upper bound for the strong in-domatic number in planar digraphs.

\begin{lemma}
\label{First Results Prop 5}
If $D$ is a digraph with at least one strong in-domatic partition, then $UG(D)$ has a connected domatic partition and $\mathsf{d}_{s}^{-} ( D) \leq \mathsf{d}_{c} (UG(D))$.
\end{lemma}
\begin{proof}
 We claim that if $\mathfrak{S}$ is a strong in-domatic partition of $V(D)$, then $\mathfrak{S}$ is a connected domatic partition of $V(UG(D))$. Since $D\langle S  \rangle$ is a strong digraph for every $S$ in $\mathfrak{S}$ then $UG(D)\langle S \rangle$ is a connected graph. On the other hand, since $S$ is an in-dominating set of $D$ for every $S$ in $\mathfrak{S}$, then $S$ is a dominating set in $UG(D)$. Therefore $\mathfrak{S}$ is a connected domatic partition of $V( UG(D) )$. In a particular case, if $\mathfrak{S}$ is a $\mathsf{d}_{s}^{-}$-partition of $v(D)$, then $|\mathfrak{S}| \leq \mathsf{d}_{c}(UG(D)\,)$ and it follows that $\mathsf{d}_{s}^{-}(D) \leq \mathsf{d}_{c}(UG(D)\,)$.
\end{proof}

\begin{theorem}
\label{First Results Prop 9}
If $D$ is a strong planar digraph, then $\mathsf{d}_{s}^{-}(D) \leq 4$. Moreover, $\mathsf{d}_{s}^{-}(D) =4$ if and only if $D$ is a complete digraph with order 4.
\end{theorem}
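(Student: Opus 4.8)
The plan is to reduce the statement to the known planar result for the connected domatic number (Theorem \ref{PlanarT1}) through the bridge supplied by Lemma \ref{First Results Prop 5}, and then to translate the extremal graph case $UG(D)\cong K_{4}$ back into a statement about the arcs of $D$. The genuine mathematical content is entirely inherited from the Hartnell--Rall characterization; everything else is bookkeeping with the definitions.

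For the upper bound, I would first observe that since $D$ is strong, Theorem \ref{First Results Prop 1} guarantees that $D$ admits a strong in-domatic partition, so $\mathsf{d}_{s}^{-}(D)$ is well defined and Lemma \ref{First Results Prop 5} applies, yielding $\mathsf{d}_{s}^{-}(D) \le \mathsf{d}_{c}(UG(D))$. Because $D$ is planar, $UG(D)$ is a planar graph, and Theorem \ref{PlanarT1} gives $\mathsf{d}_{c}(UG(D)) \le 4$; chaining the two inequalities produces $\mathsf{d}_{s}^{-}(D) \le 4$.

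For the characterization of equality, the converse direction is immediate: if $D$ is the complete digraph of order $4$, then $UG(D)\cong K_{4}$ is planar, $D$ is strong, and Remark \ref{First results Rmk 1} gives $\mathsf{d}_{s}^{-}(D)=|V(D)|=4$. For the forward direction, I would assume $\mathsf{d}_{s}^{-}(D)=4$. Then Lemma \ref{First Results Prop 5} forces $\mathsf{d}_{c}(UG(D)) \ge 4$, so combined with the bound above $\mathsf{d}_{c}(UG(D))=4$; by the extremal part of Theorem \ref{PlanarT1}, the unique planar graph attaining this value is $K_{4}$, whence $UG(D)\cong K_{4}$ and in particular $|V(D)|=4$.

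It remains to recover all arcs of $D$. Since $\mathsf{d}_{s}^{-}(D)=4=|V(D)|$, any $\mathsf{d}_{s}^{-}$-partition $\mathfrak{S}=\{S_{1},S_{2},S_{3},S_{4}\}$ must consist of four singletons, say $S_{i}=\{v_{i}\}$. Each $S_{i}$ is a strong in-dominating set; as a singleton it is trivially strong, and being in-dominating means that every vertex $v_{j}$ with $j\neq i$ satisfies $(v_{j},v_{i})\in A(D)$. Letting $i$ range over all four indices forces $(v_{j},v_{i})\in A(D)$ for every ordered pair with $i\neq j$, i.e.\ $D$ is the complete digraph on $\{v_{1},v_{2},v_{3},v_{4}\}$. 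I expect this final translation to be the only step requiring genuine care: one must invoke the precise definitions of in-dominating set and of strong in-dominating set to see that four singleton classes simultaneously in-dominate, thereby pinning down all arcs in both directions.
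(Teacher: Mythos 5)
Your proposal is correct and follows the paper's route almost exactly: the upper bound and the identification $UG(D)\cong K_{4}$ both come from chaining Lemma \ref{First Results Prop 5} with Theorem \ref{PlanarT1}, which is precisely what the paper does. The only divergence is the last step, where the paper deduces completeness by invoking Corollary \ref{First Results Prop 2} to get $\delta^{+}(D)=3$ (every vertex of a $4$-vertex digraph with minimum out-degree $3$ sees all others), whereas you argue directly that a strong in-domatic partition of a $4$-vertex digraph into $4$ classes must consist of singletons, each of which in-dominates and hence receives an arc from every other vertex. Both closings are valid; yours is slightly more self-contained (it needs only $|V(D)|=4$, not the semicompleteness extracted from $UG(D)\cong K_{4}$, and avoids the degree corollary), while the paper's is a one-line appeal to an already-established bound. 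No gaps.
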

\begin{proof}
Since $D$ is planar, then $UG(D)$ is a planar graph, that implies that  $\mathsf{d}_{c}( UG(D) ) \leq 4$. Thus, by Proposition \ref{First Results Prop 5}, we get that $\mathsf{d}_{s}^{-}(D) \leq 4$.

On the other hand, suppose that $D$ is a strong planar digraph such that $\mathsf{d}_{s}(D) = 4$.  It follows from Proposition \ref{First Results Prop 5} and Theorem \ref{PlanarT1} that $\mathsf{d}_{c}( UG(D)  )=4$ and so by Theorem \ref{PlanarT1} we get that $UG(D)$ is $K_{4}$, which implies that $D$ is a semicomplete digraph of order 4. Since $\mathsf{d}_{s}^{-}(D) =4$ it follows from Proposition \ref{First Results Prop 2} that $\delta^{+}(D)=3$, concluding that $D$ is a complete digraph.

Suppose that $D$ is a complete  digraph with order 4. It follows from remark \ref{First results Rmk 1}  that $\mathsf{d}_{s}^{-}(D) =4$. 
\end{proof}

\begin{theorem}
\label{First Results Prop 10}
Let $D$ be a planar strong digraph such that $\mathsf{d}_{s}^{-}(D)=3$. If $\mathfrak{S}= \{ S_{1} , S_{2} , S_{3} \}$ is a $\mathsf{d}_{s}^{-}$-partition of $V(D)$, then $\langle S_{i} \rangle$ is a symmetric path in $D$ for every $i$ in $\{ 1, 2, 3\}$.
\end{theorem}
\begin{proof}
 Consider the following cases on the order of $D$.

$\bullet$ \textbf{Case 1.} $D$ has order at least $5$.

In this case, we have from Proposition \ref{PlanarT1} that  $\mathsf{d}_{c}(UG(D))\leq 3$. Hence, by assumption and Proposition \ref{First Results Prop 5}, we conclude that $\mathsf{d}_{c}(UD(D))=3$. 
Is straightforward to see that every $\mathsf{d}_{s}$-partition of $D$, say $\{ V_{1}, V_{2}, V_{3} \}$, is a connected domatic partition in $UG(D)$ and by Theorem \ref{PlanarT2} we conclude that $UG(D) \langle V_{1} \rangle$, $UG(D) \langle V_{2} \rangle$ and $UG(D) \langle V_{3} \rangle$ are paths in $UG(D)$. In that case, since $V_{i}$ is a strong set in $D$ for every $i \in \{1, 2, 3 \}$, then $D\langle V_{1} \rangle$, $D\langle V_{2} \rangle$ and $D\langle V_{3} \rangle$ are symmetric paths in $D$.

$\bullet$ \textbf{Case 2.} $D$ has order at most $4$.

Since $D$ has order at most 4 and $\{ V_{1}, V_{2}, V_{3} \}$ is a $\mathsf{d}_{s}$-partition of $D$, then every set $V_{i}$ has at most two vertices. It follows that  $D\langle V_{1} \rangle$, $D\langle V_{2} \rangle$ and $D\langle V_{3} \rangle$ are symmetric paths in $D$.
\end{proof}

\section{Strong in-domatic number in Cartesian Product and composition of digraphs}

First, we will show  a lower bound of the strong in-domatic number in the Cartesian product. 

\begin{theorem}
\label{NSDIFCOD-PD-Prop2}
If $D$ and $H$ are vertex disjoint strong digraphs, then $$\mathsf{d}_{s}^{-}( D \square H ) \geq \text{\textit{max}} \{ \mathsf{d}_{s}^{-} (D) , \mathsf{d}_{s}^{-} (H) \}.$$
\end{theorem}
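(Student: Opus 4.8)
The plan is to lift a maximum strong in-domatic partition of the larger factor up to $D \square H$. Since $D \square H \cong H \square D$ (via the coordinate swap $(x,y)\mapsto(y,x)$), I may assume without loss of generality that $\mathsf{d}_{s}^{-}(D) = \max\{\mathsf{d}_{s}^{-}(D),\mathsf{d}_{s}^{-}(H)\}$. Write $k=\mathsf{d}_{s}^{-}(D)$ and fix a $\mathsf{d}_{s}^{-}$-partition $\mathfrak{S}=\{S_{1},\dots,S_{k}\}$ of $V(D)$. For each $i$ set $\tilde S_{i}=S_{i}\times V(H)=\bigcup_{x\in S_{i}}H_{x}$. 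Because $\mathfrak{S}$ partitions $V(D)$, the family $\{\tilde S_{1},\dots,\tilde S_{k}\}$ is a partition of $V(D\square H)$; it then remains to verify that each $\tilde S_{i}$ is a strong in-dominating set of $D\square H$, which yields $\mathsf{d}_{s}^{-}(D\square H)\geq k$.

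First I would check in-domination. Let $(u,v)\notin\tilde S_{i}$, so $u\notin S_{i}$. Since $S_{i}$ is an in-dominating set of $D$, there is $x\in S_{i}$ with $(u,x)\in A(D)$; then, keeping the second coordinate fixed, $((u,v),(x,v))\in A(D\square H)$ and $(x,v)\in\tilde S_{i}$. Hence $(u,v)$ has an out-neighbor in $\tilde S_{i}$, so $\tilde S_{i}$ is an in-dominating set of $D\square H$.

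Next I would establish strong connectivity of the induced subdigraph. The key observation is the identification $(D\square H)\langle\tilde S_{i}\rangle = D\langle S_{i}\rangle\square H$: an arc of $D\square H$ joining two vertices of $S_{i}\times V(H)$ is either of the form $((x,z),(x,v))$ with $(z,v)\in A(H)$ (an arc inherited from $H$) or of the form $((x,z),(u,z))$ with $(x,u)\in A(D)$ and $\{x,u\}\subseteq S_{i}$ (an arc inherited from $D\langle S_{i}\rangle$), and these are exactly the arcs of $D\langle S_{i}\rangle\square H$. Since $\mathfrak{S}$ is a strong in-domatic partition, $D\langle S_{i}\rangle$ is strong, and $H$ is strong by hypothesis, so Lemma \ref{Previos Obs 1} gives that $D\langle S_{i}\rangle\square H$ is strong. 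Thus each $\tilde S_{i}$ is a strong in-dominating set, $\{\tilde S_{1},\dots,\tilde S_{k}\}$ is a strong in-domatic partition of $D\square H$, and $\mathsf{d}_{s}^{-}(D\square H)\geq k=\max\{\mathsf{d}_{s}^{-}(D),\mathsf{d}_{s}^{-}(H)\}$.

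I do not expect a serious obstacle. The only point that requires genuine care is the identification $(D\square H)\langle S_{i}\times V(H)\rangle = D\langle S_{i}\rangle\square H$, where one must check both directions of the arc characterization and confirm that no arc leaving $S_{i}$ in the $D$-direction is inadvertently counted (this is why the membership restriction $\{x,u\}\subseteq S_{i}$ is essential). The in-domination step is immediate once one notices that an out-neighbor witnessed in the factor $D$ can be realized in $D\square H$ while holding the $H$-coordinate fixed.
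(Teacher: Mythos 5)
Your proposal is correct and follows essentially the same route as the paper: lift a $\mathsf{d}_{s}^{-}$-partition of the factor with larger parameter to the sets $S_{i}\times V(H)$, verify in-domination by fixing the $H$-coordinate, and obtain strong connectivity from the identification $(D\square H)\langle S_{i}\times V(H)\rangle = D\langle S_{i}\rangle\square H$ together with Lemma \ref{Previos Obs 1}. The only difference is cosmetic: you spell out the arc-by-arc verification of that identification, which the paper dismisses as straightforward.
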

\begin{proof}
Suppose without loss of generality that $\mathsf{d}_{s}^{-} ( H ) \leq \mathsf{d}_{s}^{-} (D)$ and  consider  a $\mathsf{d}_{s}^{-}$-partition of $D$, say $\mathfrak{S} = \{ S_{1} ,\ldots , S_{k} \}$. Define $V_{i} = \{ (x,y) \in V( D \square H) : x \in S_{i}\, \}$ and $\mathfrak{V} = \{ V_{1} , V_{2} , \dots , V_{k} \}$. We claim that $\mathfrak{V}$ is a partition of $V(D \square H)$ into strong in-dominating sets.

\begin{enumerate}
\item $\mathfrak{V} $ is a partition of $V (D \square H)$.

 It follows from the fact that  $\mathfrak{S}$ is a partition of $V(D)$.

\item For every $i$ in $\{1, \ldots , k\}$, $V_{i}$ is an in-dominating set.

In order to show that $V_{i}$ is an in-dominating set in $D \square H$, consider $(x,v) \in V( D \square H) \setminus  V_{i} $, and we will show that there exists $(u,z) \in V_{i}$ such that  $( (x,v) ,  (u,z)  ) \in A(D \square H)$. Since $(x,v) \in V( D \square H) \setminus  V_{i} $,  we get that $x \notin S_{i}$. Because of $S_{i}$ is an in-dominating set in $D$, there exists $y \in S_{i}$ such that $(x,y) \in A(D)$. On the other hand, by definition of $V_{i}$ it follows that $(y,v) \in V_{i}$, and by definition of $D \square H$, we have that $( (x,v) ,  (y,v)  ) \in A(D \square H)$. Therefore $V_{i}$ is an in-dominating set for every $i$ in $\{ 1,\dots, k\}$. 

\item For every $i$ in $\{ 1, \ldots , k \}$, $(D\square H) \langle V_{i} \rangle $ is strong . 

Since $D \langle S_{i} \rangle$ and $H$ are strong digraphs, it follows from Lemma \ref{Previos Obs 1} that $D\langle S_{i} \rangle \square H$ is strong. On the other hand, it is straightforward to see that $ D \langle S_{i} \rangle \square H = (D \square H) \langle S_{i} \times V(H) \rangle $,  and $(D \square H) \langle S_{i} \times V(H) \rangle = (D\square H)\langle V_{i} \rangle$, concluding that $D\square H \langle V_{i} \rangle$ is strong.

\end{enumerate}

By the above, $\mathfrak{V}$ is a strong in-domatic partition of $V(D \square H)$. In particular, $|\mathfrak{V}| \leq \mathsf{d}_{s}^{-} (D \square H)$ and by supposition, $\mathsf{d}_{s}^{-}( D \square H ) \geq \text{\textit{max}} \{ \mathsf{d}_{s}^{-} (D) , \mathsf{d}_{s}^{-} (H) \}.$
\end{proof}

The following theorem shows a lower bound for the strong in-domatic number in the composition of digraphs.

\begin{theorem}
\label{NSDIFCOD-CD-Prop2}
Let $D$ be a nontrivial strong digraph and $\alpha$ a sequence of pairwise vertex disjoint digraphs, say $\alpha = ( D_{v} )_{v\in V(D)}$. The composition of $D$ respect to $\alpha$ holds that $\mathsf{d}_{s}^{-}(D[\alpha])\geq   min \{ | V(D_{v} )| : v \in V(D) \}.$
\end{theorem}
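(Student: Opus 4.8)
The plan is to exhibit an explicit strong in-domatic partition of $D[\alpha]$ with exactly $m := \min\{\,|V(D_v)| : v \in V(D)\,\}$ classes; producing such a partition immediately yields $\mathsf{d}_s^-(D[\alpha]) \geq m$. Since $|V(D_v)| \geq m \geq 1$ for every $v$, I would first fix, for each $v \in V(D)$, a partition of $V(D_v)$ into exactly $m$ nonempty blocks $P_v^{1}, \ldots, P_v^{m}$ (when $|V(D_v)| = m$ these are all singletons). I then set $S_j = \bigcup_{v \in V(D)} P_v^{j}$ for $j \in \{1, \ldots, m\}$ and claim $\mathfrak{S} = \{S_1, \ldots, S_m\}$ is a strong in-domatic partition of $V(D[\alpha])$. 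That $\mathfrak{S}$ is a partition is clear, since the $P_v^j$ partition each $V(D_v)$ and the $V(D_v)$ partition $V(D[\alpha])$.

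The second step is to show each $S_j$ is an in-dominating set. Take $w \in V(D[\alpha]) \setminus S_j$, say $w \in V(D_u)$. Because $D$ is strong and nontrivial, $u$ has an out-neighbor $u'$ in $D$; by the definition of $D[\alpha]$, every vertex of $D_u$, and in particular $w$, is joined to every vertex of $D_{u'}$. As $P_{u'}^{j} \subseteq S_j$ is nonempty, $w$ has an out-neighbor in $S_j$, so $w$ is in-dominated by $S_j$.

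The heart of the argument is the third step, showing that $(D[\alpha])\langle S_j \rangle$ is strong. I would first observe that this induced subdigraph equals the composition $D[\beta_j]$ with $\beta_j = \bigl(D_v\langle P_v^{j}\rangle\bigr)_{v \in V(D)}$: inside a block the arcs are exactly those of $D_v\langle P_v^{j}\rangle$, and between blocks over $v,v'$ all arcs are present precisely when $(v,v') \in A(D)$. To prove strong connectivity directly, given $a \in P_u^{j}$ and $b \in P_w^{j}$, I take a $uw$-walk in $D$ (which exists since $D$ is strong) and lift it: along consecutive vertices of the walk the corresponding blocks are fully joined and nonempty, so I can thread a walk from $a$ through one vertex of each intermediate block to $b$. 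The delicate case is $u = w$ (in particular $a, b$ in the same block $D_u$, whose internal digraph may be edgeless); here I would use that $D$ strong and nontrivial forces a closed walk through $u$ of length at least $2$, which lifts to an $ab$-walk that leaves $D_u$ and returns. Alternatively, one can assemble a closed spanning walk of $(D[\alpha])\langle S_j\rangle$ and invoke Lemma \ref{Pevios Lema3}.

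The main obstacle is exactly this strong-connectivity step, and within it the same-block case, since the blocks $D_v\langle P_v^{j}\rangle$ need not be strong and may carry no arcs at all. The structural fact that rescues the argument is that nonemptiness of every block together with the complete joins between adjacent blocks allows the strong connectivity of the base digraph $D$ to propagate to the composition irrespective of the blocks' internal structure; this is the one point I would write out in full, after which combining the three steps gives that $\mathfrak{S}$ is a strong in-domatic partition with $m$ classes and hence $\mathsf{d}_s^-(D[\alpha]) \geq m$.
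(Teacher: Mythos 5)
Your proposal is correct and follows essentially the same route as the paper: distribute the vertices of each $D_v$ among $m$ classes so that every class meets every $V(D_v)$, get in-domination from out-neighbors in the strong base digraph $D$, and get strong connectivity of each class by lifting walks of $D$ through the complete joins between consecutive blocks (your same-block case, resolved by a closed walk of length at least $2$ through $u$, is exactly how the paper handles the surplus vertices collected in its last class $S_n$). The only difference is cosmetic: the paper takes the first $m-1$ classes to be singleton transversals and dumps all remaining vertices into the $m$-th class, which is a special case of your arbitrary partition of each $V(D_v)$ into $m$ nonempty blocks.
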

\begin{proof} 
Suppose that $V(D)=\{ v_{1}, \ldots , v_{p} \}$, and for every $i \in \{ 1, \ldots , p \}$, let  $\{ x_{1}^{i}, \ldots , x_{n_{i}}^{i}\}$ be the vertex set of $D_{v_{i}}$. Consider $n =  min \{ n_{i} : i \in \{ 1, \ldots , p\} \}$, and for every $k \in \{1, \ldots , n-1\}$, we define $S_{k}=\{ x_{k}^{i} : i\in \{1, \ldots , p\} \}$ and $S_{n}= V(D[\alpha ]) \setminus \cup_{i=1}^{n-1}S_{i}$. 

We denote by $\mathscr{S}$ the set $\{ S_{k} : k \in \{1, \ldots , n\} \}$, and we will show that $\mathscr{S}$ is a strong in-domatic partition of $D[\alpha]$.  Clearly $\mathscr{S}$ is a partition of $V(D[\alpha])$. It  remains to show that for every $i \in \{ 1, \ldots , n \}$, $S_{i}$ is a strong in-dominating set of $D[\alpha]$. 

\textbf{Claim 1.} For every $k$ in $\{ 1, \ldots , n \}$, $S_{k}$ is an in-dominating set in $D[\alpha]$. 

Consider $x_{i}^{j}$ in $V(D[\alpha]) \setminus S_{k}$. Since $v_{j}$ has at least one out-neighbor in $D$, say $v_{t}$ (because $D$ is a nontrivial strong digraph), it follows that for $x_{i}^{j}$ there exists $x_{k}^{t} \in S_{k}$  such that $(x_{i}^{j} , x_{k}^{t}) \in A(D)$, concluding that $S_{k}$ is an in-dominating set in $D[\alpha]$ for every $k$ in $\{ 1, \ldots , n \}$.

\textbf{Claim 2.} For every $k$ in $\{ 1, \ldots , n \}$, $D[\alpha ] \langle S_{k} \rangle$ is strong.

If $k \in \{ 1, \ldots , n-1\}$ it follows from remark \ref{remark6} that $D[\alpha]\langle S_{k} \rangle \cong D$, concluding that $D[\alpha]\langle S_{k} \rangle$ is a strong digraph for every $k \in \{ 1, \ldots, n-1\}$. 

It remains to show that $D[\alpha ] \langle S_{n} \rangle$ is a strong digraph. 
Let  $x_{i}^{j}$ and $x_{s}^{t}$ be two vertices in $S_{n}$. We will denote by  $W$ the set $\{ x_{n}^{i} : i \in \{1, \ldots , p \} \}$ and notice that $W \subseteq S_{n}$. Consider the following cases.

\textbf{Case 1.} $\{ x_{i}^{j} , x_{s}^{t} \} \subseteq W$.

Since $D[\alpha]\langle W\rangle \cong D$ and $D$ is a strong digraph, it follows that there exists an $x_{i}^{j}x_{s}^{t}$-walk in $D[\alpha ]\langle S_{n} \rangle$.

\textbf{Case 2.} $x_{i}^{j} \notin W$ and $x_{s}^{t} \in W$.

Consider an out-neighbor of $v_{j}$ in $D$, say $v_{r}$. By definition of $D[\alpha]$ it follows that $x_{n}^{r}$ is a vertex in $W$ such that $(x_{i}^{j} , x_{n}^{r})$ is an arc in $D[\alpha ]$. By case 1, there exists an $x_{n}^{r}x_{s}^{t}$-walk in $D[\alpha]\langle W \rangle$, say $T$. Hence $(x_{i}^{j}, x_{n}^{r}) \cup T$ is an $x_{i}^{j}x_{s}^{t}$-walk in $D[\alpha]\langle S_{n} \rangle$. 

Now consider an in-neighbor of $v_{j}$ in $D$, say $v_{l}$. In the same way, $x_{n}^{l}$ is a vertex in $W$ such that $(x_{n}^{l}, x_{i}^{j})$ is an arc of $D[\alpha]$. By case 1, there exists an $x_{s}^{t}x_{n}^{l}$-walk in $D[\alpha]\langle W \rangle$, say $T'$. Hence $T' \cup (x_{n}^{l} , x_{i}^{j})$ is an $x_{s}^{t}x_{i}^{j}$-walk in $D[\alpha ]\langle S_{n} \rangle$. 

\textbf{Case 3.} $\{x_{i}^{j}, x_{s}^{t}\} \cap W = \emptyset$. 

Consider an out-neighbor of $v_{j}$ in $D$, say $v_{r}$. It follows from definition of $D[\alpha ]$ that $x_{n}^{r}$ is a vertex in $W$ and $(x_{i}^{j} , x_{n}^{r})$ is an arc of $D[\alpha ]$. By case 2, there exists an $x_{n}^{r}x_{s}^{t}$-walk in $D[\alpha] \langle W \rangle$, say $T$. Hence, $(x_{i}^{j}, x_{n}^{r})\cup T$ is an $x_{i}^{j}x_{s}^{t}$-walk in $D[\alpha ] \langle S_{n} \rangle$. 

It follows from the preceding cases that $D[\alpha ] \langle S_{n} \rangle $ is a strong digraph.

By Claim 1 and Claim 2 we have that $\mathscr{S}$ is a strong in-domatic partition of $V(D[\alpha ])$. Therefore, $\mathsf{d}_{s}^{-}(D[\alpha])\geq   min \{ | V(D_{v} )| : v \in V(D) \}.$
\end{proof}

As a consequence of the previous results, we have the following corollaries.

\begin{corollary}
\label{NSDIFCOD-CD-Cor1}
Let $m$ and $p$ be two natural numbers with $0< m \leq \frac{p}{2}$ and $p\geq3$. Then there exists a digraph $D$ of order $p$ such that $\mathsf{d}_{s}^{-}(D)=m$.
\end{corollary}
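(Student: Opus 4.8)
The plan is to realize the required digraph as a composition and then trap its strong in-domatic number between the lower bound of Theorem \ref{NSDIFCOD-CD-Prop2} and the upper bound of Proposition \ref{First Results Prop 3}. First I would dispose of the case $m=1$ separately: the directed cycle on $p$ vertices (with $p\geq 3$) is strong, has $\delta^{+}=1$, and has no in-dominating vertex, since each vertex has in-degree $1$ while $p-1\geq 2$ vertices would have to send an arc into it. Hence Proposition \ref{First Results Prop 3} gives $\mathsf{d}_{s}^{-}\leq 1$, and as any strong digraph admits the trivial partition $\{V(D)\}$ we also get $\mathsf{d}_{s}^{-}\geq 1$, so $\mathsf{d}_{s}^{-}=1$.

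For $m\geq 2$, let $F$ be the complete digraph on two vertices $\{a,b\}$, that is $A(F)=\{(a,b),(b,a)\}$, which is nontrivial and strong. Let $\alpha=(F_{a},F_{b})$, where $F_{a}$ is the arcless digraph on $m$ vertices and $F_{b}$ is the arcless digraph on $p-m$ vertices, and set $D=F[\alpha]$. Then $D$ has order $m+(p-m)=p$ and, by the definition of composition, $D$ consists of two arcless parts $A$ and $B$ with $|A|=m$ and $|B|=p-m$, together with all arcs from $A$ to $B$ and all arcs from $B$ to $A$; in particular $D$ is strong, since any two of its vertices are joined by a walk of length at most $2$ through the opposite part.

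The two remaining steps are the lower and upper bounds. For the lower bound I would apply Theorem \ref{NSDIFCOD-CD-Prop2} directly, obtaining $\mathsf{d}_{s}^{-}(D)\geq \min\{|V(F_{a})|,|V(F_{b})|\}=\min\{m,p-m\}=m$, where the final equality uses the hypothesis $m\leq p/2$. For the upper bound I would compute the out-degrees: every vertex of $A$ has out-degree $p-m$ and every vertex of $B$ has out-degree $m$, so $\delta^{+}(D)=m$. Moreover, since $m\geq 2$ and $p-m\geq m\geq 2$, neither part is a singleton, so $D$ has no in-dominating vertex: a vertex $x\in A$ receives no arc from the other vertices of $A$, hence not every vertex of $D$ out-dominates it, and symmetrically for $x\in B$. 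Thus Proposition \ref{First Results Prop 3} yields $\mathsf{d}_{s}^{-}(D)\leq \delta^{+}(D)=m$, and combining the two bounds gives $\mathsf{d}_{s}^{-}(D)=m$.

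The main obstacle is the upper bound, because Theorem \ref{NSDIFCOD-CD-Prop2} supplies only a lower bound; the construction must therefore be engineered so that a matching upper bound is available. This dictates the design choices: taking the two gadgets arcless keeps the minimum out-degree exactly equal to $m$ and, crucially, destroys every in-dominating vertex, so that the sharp bound of Proposition \ref{First Results Prop 3} applies rather than the weaker $\delta^{+}+1$ of Corollary \ref{First Results Prop 2}. The condition that both parts have at least two vertices is precisely what rules out $m=1$, which is why that case is handled separately by the directed cycle.
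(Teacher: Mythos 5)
Your proof is correct and follows essentially the same route as the paper: a composition of a cycle with arcless digraphs, with the lower bound from Theorem \ref{NSDIFCOD-CD-Prop2} and the upper bound from Proposition \ref{First Results Prop 3} via the absence of an in-dominating vertex. The only difference is parametric --- the paper uses a cycle of length $q=\lfloor p/m\rfloor$ with fibers of size $m$ (one of size $m+r$), while you fix the cycle length at $2$ with fibers of sizes $m$ and $p-m$ and treat $m=1$ separately --- which changes nothing essential.
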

\begin{proof}
Let $q$ and $r$ be two natural numbers such that   $m > r \geq 0$ and $p = mq +r $. Notice that $m \leq \frac{p}{2}$ implies that $q\geq 2$. Consider a cycle $H$ of order $q$, say $(v_{1}, \ldots, v_{q}, v_{1})$, and $\alpha$ a sequence of pairwise vertex disjoint digraphs, say $( D_{v_{1}}, \ldots , D_{v_{q}} )$, such that $A(D_{v_{i}})=\emptyset$ for every $i$ in $\{1, \ldots , q \}$, $|V(D_{v_{q}})|= m+r$ and $|V(D_{v_{i}})|=m$ for every $i$ in $\{1, \ldots, q-1\}$.

We claim that the digraph $D$ defined by $H [\alpha ]$ is the desired digraph. It is straightforward to see that $D$ has order $p$. In order to prove that $\mathsf{d}_{s}^{-}(D) \leq m$, we will show that $D$ holds the hypothesis of Proposition \ref{First Results Prop 3}, that is, $D$ has no  in-dominating vertex. Consider the following cases.

 \textbf{Case 1.} $q \geq 3$.
 
 In this case we have that $H$ has at least three vertices, so $D$ has no in-dominating vertex. 
 
 \textbf{Case 2.} $q=2$.
 
 Since $p=mq+r$ and $m \leq \frac{p}{2}$, we have that $r=0$, which implies that $p=mq$. Hence $m \geq 2$, because $p\geq 3$. Therefore $D$ is a bipartite digraph without in-dominating vertex. 
 
By the above, $D$ has no in-dominating vertex and we get from Proposition \ref{First Results Prop 3} that $\mathsf{d}_{s}^{-}(D) \leq \delta^{+}(D)$. 
On the other hand, notice that for every vertex $x$ in $V(D)\setminus V(D_{v_{q-1}})$ we have that $\delta^{+}_{D} (x) = m$, and for every $x$ in $V(D_{v_{q-1}})$ we have that $\delta^{+}_{D} (x) = m +r$, which implies that $\delta ^{+} (D) = m$. Hence, $\mathsf{d}_{s}^{-}(D) \leq m$. 

Finally, by Proposition \ref{NSDIFCOD-CD-Prop2} we have that $m \leq \mathsf{d}_{s}^{-} (D)$, which implies that $\mathsf{d}_{s}^{-}(D)=m$. Therefore, $D$ is the desired digraph.
\end{proof}

\begin{corollary}
\label{NSDIFCOD-CD-Prop3}
Let $p$ and $n$ be two natural number such that $p\geq 2$ and $n\geq 2$. If $n$ divides $p$, then there exists a strong in-domatic critical digraph of order $p$ such that $\mathsf{d}_{s}^{-}(D)=n$.
\end{corollary}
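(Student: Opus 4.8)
The plan is to realize the desired digraph as a composition of a directed cycle with edgeless digraphs, exactly in the spirit of Corollary \ref{NSDIFCOD-CD-Cor1}. Write $p=nq$. I treat the principal case $q\geq 2$ explicitly and reduce the degenerate case $q=1$ (that is, $p=n$) to the complete digraph at the end. For $q\geq 2$, let $C_q=(v_1,\ldots,v_q,v_1)$ be the directed cycle of length $q$ and let $\alpha=(D_{v_i})_{i=1}^{q}$ be the sequence in which every $D_{v_i}$ is the digraph $\overline{K}_n$ on the $n$ vertices $\{x_i^{1},\ldots,x_i^{n}\}$ and no arcs. Set $D=C_q[\alpha]$. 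By construction $|V(D)|=nq=p$, and the only arcs of $D$ are the complete bundles $x_m^{s}\to x_{m+1}^{t}$ between consecutive blocks $B_m:=V(D_{v_m})$ (indices read mod $q$). Hence $\delta^{+}_{D}(x_m^{s})=|B_{m+1}|=n$ and $\delta^{-}_{D}(x_m^{s})=|B_{m-1}|=n$ for every vertex, so $\delta^{+}(D)=n$ and, since $n<nq-1$ whenever $n,q\geq 2$, $D$ has no in-dominating vertex.

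First I would pin down the value $\mathsf{d}_{s}^{-}(D)=n$. The lower bound is immediate from Theorem \ref{NSDIFCOD-CD-Prop2}, since $\min\{|V(D_{v_i})|\}=n$; concretely the transversals $S_i=\{x_1^{i},\ldots,x_q^{i}\}$ each induce a copy of $C_q$ (by Remark \ref{remark6}) and form a strong in-domatic partition. The matching upper bound $\mathsf{d}_{s}^{-}(D)\leq \delta^{+}(D)=n$ then follows from Proposition \ref{First Results Prop 3}, because $D$ has no in-dominating vertex.

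The heart of the argument is a structural lemma: \emph{every} $\mathsf{d}_{s}^{-}$-partition of $D$ consists of $n$ transversals, each meeting every block in exactly one vertex and inducing a copy of $C_q$. Indeed, since $D$ has no in-dominating vertex, each class of such a partition has size at least $2$; and if $S$ is a strong class with $|S|\geq 2$, then every $x_m^{s}\in S$ has an out-neighbour in $S$, forcing $S\cap B_{m+1}\neq\emptyset$, so going once around the cycle shows that $S$ meets all $q$ blocks and thus $|S|\geq q$. Counting $nq$ vertices in $n$ classes each of size at least $q$ forces every class to contain exactly one vertex per block, whence $D\langle S\rangle$ is precisely the directed $q$-cycle through those vertices. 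With this lemma I would invoke the characterization in Theorem \ref{First Results Prop 7}. That $D-a$ is strong for every arc $a=(x_m^{s},x_{m+1}^{t})$ follows from Lemma \ref{Previos Lema 1}: as $n\geq 2$ one may pick $t'\neq t$ and $s'\neq s$ and route $x_m^{s}\to x_{m+1}^{t'}\to\cdots\to x_m^{s'}\to x_{m+1}^{t}$ once around the cycle, avoiding $a$. For any $\mathsf{d}_{s}^{-}$-partition $\mathfrak{S}$, condition (b.1) holds since each $D\langle S_i\rangle\cong C_q$ and deleting any arc of a directed cycle destroys strong connectivity, and condition (b.2) holds since $N^{+}(u)=B_{m+1}$ for $u=x_m^{t}\notin S_i$ while the transversal $S_i$ meets $B_{m+1}$ in exactly one vertex. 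Theorem \ref{First Results Prop 7} then gives that $D$ is a strong in-domatic critical digraph. For $q=1$ (so $p=n$) I would instead take $D$ to be the complete digraph of order $n$: by Remark \ref{First results Rmk 1} we have $\mathsf{d}_{s}^{-}(D)=n$, and for $n\geq 3$ a direct check (the unique $n$-class partition into singletons fails after deleting an arc, while $D-a$ stays strong) together with Proposition \ref{First Results Prop 6} gives $\mathsf{d}_{s}^{-}(D-a)=n-1$.

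The main obstacle I anticipate is the structural lemma forcing every optimal partition to be a transversal partition; once it is established the remaining conditions of Theorem \ref{First Results Prop 7} are routine, but without it one cannot rule out uneven classes (all one knows a priori is that each class has size at least $2$), and it is essential that the characterization be checked against \emph{all} $\mathsf{d}_{s}^{-}$-partitions, of which there are many here. I would also flag the genuinely exceptional pair $(p,n)=(2,2)$: the only digraph of order $2$ with $\mathsf{d}_{s}^{-}=2$ is the symmetric $2$-cycle, and deleting an arc from it leaves a non-strong digraph, so no critical digraph exists in that case; consequently the case $q=1$ should carry the hypothesis $n\geq 3$ (equivalently, the statement is to be read for $p>n$ together with $p=n\geq 3$).
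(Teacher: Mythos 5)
Your proposal is correct and follows essentially the same route as the paper: the same composition of a cycle of length $q=p/n$ with $n$-vertex edgeless digraphs, the same reduction to the characterization in Theorem \ref{First Results Prop 7}, and the same structural lemma that every $\mathsf{d}_{s}^{-}$-partition is a transversal partition inducing copies of the cycle (you obtain $|S|\geq q$ from the fact that out-neighbourhoods lie in the next block, the paper from the girth of $D$; both arguments work). Your flag on $(p,n)=(2,2)$ is a genuine and worthwhile catch: the paper's case $p=n$ only cites Remark \ref{First results Rmk 1} for the value of $\mathsf{d}_{s}^{-}$ and never verifies criticality of the complete digraph, which indeed fails for order $2$ since deleting an arc destroys strong connectivity.
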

\begin{proof}
Notice that if $p=n$, then the complete digraph of order $p$ is the desired digraph, by Remark \ref{First results Rmk 1}. So we can assume that $p\neq n$. Let $t$ be in $\mathbb{N}$ such that $p=nt$, $H$ a cycle of order $t$, say $(w_{1}, \ldots , w_{t}, w_{1})$, and $\alpha$  a sequence of $t$ pairwise vertex disjoint digraphs, say $\alpha = (D_{w_{1}}, \ldots , D_{w_{t}})$, such that for every $i$ in $\{ 1, \ldots , t\}$ we have that $A(D_{w_{i}})=\emptyset$ and $|V(D_{w_{i}})|=n$. We claim that the digraph $D$ defined by $H[ \alpha]$ is the desired digraph. 

\textbf{Claim 1}. $D$ has order $p$ and $\mathsf{d}_{s}^{-}(D)=n$. 

An analogous proof as in Corollary \ref{NSDIFCOD-CD-Prop3} will show that $D$ has order $p$ and $\mathsf{d}_{s}^{-}(D)=n$. 

On the other hand, in order to show that $D$ is a strong in-domatic critical digraph, we will show that $D$ holds the hypotheses of Theorem \ref{First Results Prop 7}.  

\textbf{Claim 2.} $D-(u,v)$ is strong for every arc $(u,v)$ in $A(D)$. 

Let $(u,v)$ be an arc in $D$. We prove that there exists a $uv$-walk on $D$ which does not contain the arc $(u,v)$ and then, in order to conclude, we use Lemma \ref{Previos Lema 1}. By construction of $D$, there exists $k$ in $\{ 1, \ldots , t\}$ such that $u \in V(D_{w_k})$ and $v \in V(D_{w_{k+1}})$ (indices modulo $t$). On the other hand, for every $i$ in $\{1, \ldots , t \}\setminus \{k, k-1\}$ consider a vertex $x_{i}$ in $D_{w_{i}}$. Since $n\geq 2$, we can choose $x_{k}$ in $V(D_{w_{k}})\setminus \{u\}$ and $x_{k+1}$ in $V(D_{w_{k+1}})\setminus\{v\}$. Notice that $C=(x_{1}, \ldots, x_{k}, x_{k+1}, \ldots, x_{t}, x_{1})$ is a cycle in $D-(u,v)$. So, $(u, x_{k+1})\cup (x_{k+1}, C, x_{1})\cup (x_{1} , C, x_{k}) \cup (x_{k}, v)$ is a $uv$-walk that does not contain the arc $(u,v)$. Therefore $D-(u,v)$ is strong.

\textbf{Claim 3.} If $\mathscr{S}=\{S_{1}, \ldots , S_{n} \}$ is a $\mathsf{d}_{s}^{-}$-partition of $D$ then $|S_{i} \cap V(D_{w_j})|=1$ for every $i$ in $\{1, \ldots, n \}$ and every $j$ in $\{1, \ldots,  t\}$.

Notice that, by definition of $D$, we have that  $t= min \{ l(C): C \text{ is a cylce in } D \}$. From Lemma \ref{Pevios Lema3} we have that $D\langle S_{i} \rangle$ has a spanning closed walk for every $i$ in $\{ 1, \ldots, n \}$ which implies that $D\langle S_{i} \rangle$ has a cycle, concluding that $|V(D\langle S_{i}\rangle)|\geq t$ for every $i$ in $\{ 1, \ldots n \}$. If there exists $k$ in $\{ 1, \ldots , n \}$ such that $|S_{k}|>t$, then 
$$|V(D)|=\sum_{i=1}^{n} |S_{i}|>\sum _{i=1}^{n} t = nt= |V(D)|,$$
which is no possible. So, every element of $\mathscr{S}$ has $t$ vertices of $D$. 

Since $D\langle S_{i} \rangle$ has a spanning closed walk (by Lemma \ref{Pevios Lema3}), then $D\langle S_{i} \rangle$ contains a cycle $C$ which has length at least $t$. Because of $|S_{i}|=t$ we conclude that $l(C) =t$; that is $D\langle S_{i} \rangle =C$. 

Hence $|S_{i} \cap V(D_{w_j})|=1$ for every $i$ in $\{1, \ldots, n \}$ and every $j$ in $\{1, \ldots,  t\}$.

\textbf{Claim 4.} $|S_{i} \cap N^{+}(x)|=1$ for every $i$ in $\{ 1, \ldots, n \}$ and every $x$ in $V(D) \setminus S_{i}$.

Let $x$ be in $V(D) \setminus S_{i}$ and suppose that $x \in V(D_{w_{k-1}})$ for some $k$ in $\{2, \ldots ,t+1\}$. Since $S_{i}$ is an in-dominating set in $D$, then there exists a  vertex $z$ in $S_{i}$ such that $(x,z) \in A(D)$. Notice that $z \in V(D_{w_{k}})$. Since $|S_{i} \cap V(D_{w_k})|=1$ and $N^{+}(x) \subseteq V(D_{w_k}),$ then $S_{i} \cap N^{+}(x)=\{ z\}$, that is $|S_{i} \cap N^{+}(x)|=1$.

\textbf{Claim 5.}  $D \langle S_{i} \rangle - a $ is not strong for every $i$ in $\{1,\ldots, k \}$ and  every $a$ in $A ( D\langle S_{i} \rangle )$.

Since $D\langle S_{i} \rangle$ is a cycle, we have that $D\langle S_{i} \rangle-a$ is not strong for every arc $a$ in $D\langle S_{i} \rangle$ and every $i \in \{1, \ldots , n \}$.

Therefore, it follows from Claims 1,2,4 and 5, and by Theorem \ref{First Results Prop 7}, that $D$ is a strong in-domatic critical digraph. 
\end{proof}

\section{Strong in-domatic number in line digraph and other associated digraphs}

\begin{proposition}
\label{NSDIFCDA-DL-Prop3}
Let $D$ be a nontrivial strong digraph, $L(D)$ its line digraph and $E$ a nonempty subset of $A(D)$. $E$ is a strong cover of $D$ if and only if $E$ is a strong in-dominating set of $L(D)$. 
\end{proposition}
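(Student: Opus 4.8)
The plan is to peel apart the two clauses on each side of the biconditional and match them up via the line-digraph machinery already available. Recall that $E$ being a strong cover of $D$ means $D[E]$ is spanning and $D[E]$ is strong, while $E$ being a strong in-dominating set of $L(D)$ means $E$ in-dominates $L(D)$ and $L(D)\langle E\rangle$ is strong. The connective tissue is Lemma \ref{Previos L2}, giving $L(D)\langle E\rangle = L(D[E])$, together with Lemma \ref{remark3}, giving that $L(D[E])$ is strong if and only if $D[E]$ is strong. So the strong-connectivity halves of the two definitions are equivalent outright, and everything reduces to proving that, under the hypothesis that $D$ is strong and nontrivial, $E$ in-dominates $L(D)$ if and only if $D[E]$ is spanning.

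Before citing Lemma \ref{remark3} I would check that $D[E]$ satisfies its hypotheses: since $E\neq\emptyset$ and $D$ is loopless, $D[E]$ has at least one arc and at least two vertices, and every vertex of $D[E]$ is, by definition of $D[E]$, an end-vertex of some arc of $E$ and hence not isolated. With this verified, Lemma \ref{remark3} applies to $D[E]$, so $L(D)\langle E\rangle$ is strong exactly when $D[E]$ is strong.

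For the remaining equivalence I would argue both directions directly. Assume first that $E$ in-dominates $L(D)$ and fix $w\in V(D)$; since $D$ is strong and nontrivial, $w$ has an in-arc $(z,w)\in A(D)$. If $(z,w)\in E$ then $w\in V(D[E])$; otherwise $(z,w)$ is in-dominated in $L(D)$, so by the definition of $L(D)$ there is an arc of $E$ whose tail is $w$, again giving $w\in V(D[E])$. Thus $D[E]$ is spanning. Conversely, assume $D[E]$ is spanning and strong, and fix $(x,y)\in A(D)\setminus E$. Spanning gives $y\in V(D[E])$, and strongness of $D[E]$ (which has at least two vertices) gives an out-arc $(y,z)\in E$; by definition of the line digraph $((x,y),(y,z))\in A(L(D))$, so $(x,y)$ is in-dominated by $E$. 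Hence $E$ in-dominates $L(D)$.

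Combining the spanning/in-domination equivalence with the Lemma \ref{remark3} equivalence for strongness yields both directions of the proposition. I expect the only genuinely delicate point to be the asymmetry in the in-domination condition: in-domination in $L(D)$ speaks about \emph{out}-arcs issuing from the \emph{head} of a non-selected arc, so recovering ``spanning'' requires feeding in an \emph{in}-arc (supplied by strongness of $D$), whereas recovering ``in-dominating'' from ``spanning'' requires an out-arc at $y$ (supplied by strongness of $D[E]$, not by spanning alone). Keeping straight which strongness hypothesis powers which implication is the crux; beyond that the proof is a direct unwinding of the three definitions with no estimates involved.
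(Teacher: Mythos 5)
Your proof is correct and follows essentially the same route as the paper's: Lemma \ref{Previos L2} plus Lemma \ref{remark3} handle the strong-connectivity halves, strongness of $D$ supplies the in-arc needed to show $D[E]$ is spanning from in-domination, and strongness of $D[E]$ supplies the out-arc needed to show in-domination from the strong-cover property. Your explicit check that $D[E]$ meets the hypotheses of Lemma \ref{remark3}, and your remark that the in-domination/spanning equivalence is not unconditional but borrows strongness on each side, are accurate refinements of what the paper does implicitly.
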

\begin{proof}
For the sufficiency, consider a strong cover of $D$, say $E$. Since $D [ E ]$ is strong, it follows that $L(D [ E ])$ is strong (by Lemma \ref{remark3}), which implies that $L ( D) \langle E \rangle$ is strong (by Lemma \ref{Previos L2}).   

In order to prove that $E$ is an in-dominating set in $L(D)$, consider a vertex in $V(L(D)) \setminus E$, say $(x,z)$.  Since $E$ is a strong cover, we get that $D [ E ]$ is a spanning subdigraph of $D$, which implies that $z\in V(D [ E ] )$. Because of $D [ E ]$ is a non trivial strong subdigraph, there exists $w$ in $N^{+}_{D[E]}(z)$. Hence $(x,z)$ is in-dominated by $(z,w)$ in $L(D)$. So, $E$ is a strong in-dominating set in $L(D)$.

For the necessary condition of Proposition \ref{NSDIFCDA-DL-Prop3}, suppose that $E$ is a strong in-dominating set in $L(D)$. In order to prove that $D[E]$ is spanning subdigraph of $D$, consider a vertex $x$ in $V(D)$. Since $D$ is a nontrivial strong digraph, then there exists $u$ in $N^{-}_{D}(x)$. If $(u,x) \in E$, then we have that $x \in V( D [ E ] )$. If $(u,x) \in V(L(D))\setminus E$,  since $E$ is an in-dominating set in $L(D)$, then there exists $b$ in $E$ such that that $((u,x),b) \in A(L(D))$. Hence, it follows from definition of $L(D)$ that $b=(x,z)$ for some $z$ in $V(D)$, concluding that $x \in V(D [ E ] )$. Therefore, $D [ E ]$ is a spanning subdigraph of $D$.

We will prove that $D[ E ]$ is strong. Since $L(D) \langle E \rangle$ is strong, we have that  $L(D [ E ])$ is strong (by Lemma \ref{Previos L2}). So, $D [ E ]$ is strong (by Lemma \ref{remark3}). 

Therefore, $E$ is a strong cover of $D$.
\end{proof}

\begin{theorem}
\label{NSDIFCDA-DL-Teo1}
If $D$ is a strong digraph of order at least three, then  $\mathsf{d}_{s}^{-}( L(D) )= \Lambda (D).$
\end{theorem}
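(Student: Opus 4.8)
The plan is to show that the strong in-domatic partitions of $L(D)$ and the partitions of $A(D)$ into strong covers are literally the same objects, so their maximum cardinalities coincide. The bridge is Proposition \ref{NSDIFCDA-DL-Prop3}, which, for a single set $E \subseteq A(D)$, identifies the property of being a strong cover of $D$ with the property of being a strong in-dominating set of $L(D)$. Since $V(L(D)) = A(D)$, a partition of $V(L(D))$ is the very same thing as a partition of $A(D)$, and the only content to be transferred is the class-by-class equivalence supplied by Proposition \ref{NSDIFCDA-DL-Prop3}. First I would record that both quantities are well defined: because $D$ is strong and nontrivial, Lemma \ref{remark3} gives that $L(D)$ is strong, so by Theorem \ref{First Results Prop 1} it admits a strong in-domatic partition and $\mathsf{d}_{s}^{-}(L(D))$ exists; on the other hand $\{A(D)\}$ is a partition of $A(D)$ into a single strong cover, so $\Lambda(D)$ exists.

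Next I would prove the two inequalities. For $\mathsf{d}_{s}^{-}(L(D)) \geq \Lambda(D)$, I take a $\Lambda$-partition $\{E_{1}, \ldots, E_{m}\}$ of $A(D)$ with $m = \Lambda(D)$; applying the sufficiency half of Proposition \ref{NSDIFCDA-DL-Prop3} to each $E_{i}$ shows every class is a strong in-dominating set of $L(D)$, so the same partition, now read as a partition of $V(L(D))$, is a strong in-domatic partition of $L(D)$ with $m$ classes. For the reverse, I take a $\mathsf{d}_{s}^{-}$-partition $\{F_{1}, \ldots, F_{k}\}$ of $L(D)$ with $k = \mathsf{d}_{s}^{-}(L(D))$; the necessity half of Proposition \ref{NSDIFCDA-DL-Prop3} turns each strong in-dominating class $F_{i}$ into a strong cover of $D$, so this is a partition of $A(D)$ into $k$ strong covers, whence $k \leq \Lambda(D)$. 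Combining the two bounds gives $\mathsf{d}_{s}^{-}(L(D)) = \Lambda(D)$.

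The step I expect to be the genuine obstacle is the backward direction of the correspondence, that a strong in-dominating set of $L(D)$ really is a strong cover of $D$, and this is exactly where the hypothesis that $D$ has order at least three enters. For order two the equivalence fails: if $D$ is the digraph on $\{a,b\}$ with both arcs, then $\{(a,b)\}$ is a strong in-dominating set of $L(D)$, since its induced subdigraph is a single vertex (hence strong) and the remaining vertex $(b,a)$ points to it, yet $D[\{(a,b)\}]$ is not strong and so is not a strong cover. Requiring $|V(D)| \geq 3$ forces any spanning $D[F_{i}]$ to have at least three vertices, which excludes the degenerate single-arc situation in which the line-digraph criterion of Lemma \ref{remark3} is only vacuously satisfied; this is the point to handle with care when invoking Proposition \ref{NSDIFCDA-DL-Prop3} in the direction from $L(D)$ back to $D$.
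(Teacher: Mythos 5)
Your proof is correct and follows essentially the same route as the paper's: both directions are obtained by reading a partition of $A(D)=V(L(D))$ class by class through Proposition \ref{NSDIFCDA-DL-Prop3}, once in each direction. Your additional observation pinpointing where the hypothesis $|V(D)|\geq 3$ is needed (ruling out a spanning single-arc class, for which the line-digraph criterion degenerates) is a worthwhile clarification that the paper's proof leaves implicit, but it does not change the argument.
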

\begin{proof}
Consider a $\Lambda$-partition of $A(D)$, say  $\mathscr{F}$. According to Proposition \ref{NSDIFCDA-DL-Prop3} we have that $F$ is an in-dominating strong set in $L(D)$ for every $F$ in $\mathscr{F}$, which implies that $\mathscr{F}$ is a strong in-domatic partition of $V(L(D))$, and so $\Lambda (D) \leq \mathsf{d}_{s}^{-}( L(D) )$. In the same way, if $\mathscr{S}$ is a $\mathsf{d}_{s}^{-}$-partition of $V( L(D)  )$, it follows from Proposition \ref{NSDIFCDA-DL-Prop3} that $S$ is a strong cover of $D$ for every $S$ in $\mathscr{S}$, concluding that $\mathscr{S}$ is a partition of $A(D)$ into strong covers, so $\mathsf{d}_{s}^{-}( L(D)  ) \leq \Lambda (D)$. Therefore, $\mathsf{d}_{s}^{-}(\, L(D) \, )= \Lambda (D)$.
\end{proof}

\begin{lemma}
\label{LemSR}
If $D$ is a strong digraph, then $\mathsf{d}_{s}^{-}(S(D))=\mathsf{d}_{s}^{-}(R(D))=1.$
\end{lemma}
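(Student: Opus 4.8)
The plan is to show that both $S(D)$ and $R(D)$ admit no strong in-domatic partition into two or more classes, so that the only strong in-domatic partition is the trivial one $\{V\}$, giving $\mathsf{d}_s^-=1$ in each case. The key structural fact to exploit is the remark made just after the definitions: since $D$ has at least one arc (as $D$ is strong and nontrivial), neither $S(D)$ nor $R(D)$ has an in-dominating vertex. More precisely, I would focus on the ``subdivision'' vertices, namely the vertices of $S(D)$ and $R(D)$ coming from $A(D)$, i.e.\ vertices of the form $(u,v)$.

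First I would examine the out-neighborhoods. In $S(D)$, a vertex $x=(u,v)\in A(D)$ has $N^+_{S(D)}(x)=\{v\}$, a single out-neighbor; and a vertex $x\in V(D)$ has $N^+_{S(D)}(x)=\{(u,v)\in A(D):u=x\}$, all of which lie in $A(D)$. The same holds for $R(D)$: the arc-vertex $(u,v)$ again has out-degree one with $N^+_{R(D)}((u,v))=\{v\}$. This unit out-degree at the arc-vertices is the crux. Suppose toward a contradiction that $\mathfrak{S}$ is a strong in-domatic partition with $k\ge 2$ classes, and pick an arc-vertex $(u,v)$ lying in some class $S$. Since $k\ge 2$, there is a class $S'\neq S$, and because $S'$ is an in-dominating set, $(u,v)$ must have an out-neighbor in $S'$; but its only out-neighbor is $v$, so $v\in S'$. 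On the other hand, if $|S|\ge 2$ then $D\langle S\rangle$ being strong forces $(u,v)$ to have an out-neighbor inside $S$ as well, again forcing $v\in S$, contradicting $v\in S'$. Hence every class containing an arc-vertex must be a singleton $\{(u,v)\}$; but a singleton $\{(u,v)\}$ is a strong in-dominating set only if $(u,v)$ is an in-dominating vertex of the whole digraph, which it is not (as noted, these digraphs have no in-dominating vertex). This contradiction rules out $k\ge 2$.

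The main obstacle, and the step to treat carefully, is verifying that every class of a hypothetical partition with $k\ge2$ classes must in fact contain an arc-vertex, so that the argument above applies to produce a contradiction rather than merely constraining the arc-vertices. I would argue this by noting that the arc-vertices are ``unavoidable'' in-dominators: concretely, for any class $S$ and any vertex $w$ outside $S$, every out-neighbor of $w$ that lies in $A(D)$ is an arc-vertex, and since in $S(D)$ the original vertices $x\in V(D)$ have $N^+(x)\subseteq A(D)$, any class in-dominating such an $x$ must meet $A(D)$. Combined with the fact that $V(D)\neq\emptyset$ and $k\ge 2$ guarantees some original vertex lies outside some class, this shows the partition is forced to place arc-vertices in distinct classes, and the singleton argument then finishes the contradiction. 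I would write the $S(D)$ case in full and remark that the $R(D)$ case is identical, since the out-neighborhood of an arc-vertex $(u,v)$ is the singleton $\{v\}$ in $R(D)$ exactly as in $S(D)$, and $R(D)$ likewise has no in-dominating vertex; the extra arcs of $R(D)$ on $V(D)$ do not affect the decisive unit out-degree at the arc-vertices. Therefore $\mathsf{d}_s^-(S(D))=\mathsf{d}_s^-(R(D))=1$.
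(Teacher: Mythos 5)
Your proof is correct and rests on exactly the same two facts the paper uses: every arc-vertex of $S(D)$ and $R(D)$ has out-degree one, and neither digraph has an in-dominating vertex. The paper simply cites Proposition \ref{First Results Prop 3} (which gives $\mathsf{d}_{s}^{-}\leq\delta^{+}$ for strong digraphs without an in-dominating vertex) to finish in one line, whereas you re-derive that proposition's argument in this special case; your final paragraph about every class containing an arc-vertex is unnecessary, since the contradiction is already complete once the class of a single arc-vertex is forced to be a non-in-dominating singleton.
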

\begin{proof}
If $D$ has order 1, Lemma \ref{LemSR} holds. If $|V(D)| \geq 2$, then for every arc $a$ in $A(D)$ we have that $|N^{+}_{S(D)}(a)|=1$. It follows from definition of $S(D)$ that it has no in-dominating vertex, hence $\mathsf{d}_{s}^{-}(S(D))=1$ (by Proposition \ref{First Results Prop 3}). A similar proof shows that $\mathsf{d}_{s}^{-}(R(D))=1$.
\end{proof}

\begin{proposition}
\label{NSDIFCDA-DA-Prop2}

If $D$ is a nontrivial strong digraph, then $\mathsf{d}_{s}^{-}( L(D) ) \leq \mathsf{d}_{s}^{-}(  Q(D) ).$
\end{proposition}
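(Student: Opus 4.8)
The plan is to take a $\mathsf{d}_{s}^{-}$-partition of $L(D)$ and extend it to a strong in-domatic partition of $Q(D)$ by inserting all the ``old'' vertices $V(D)$ into a single class. The engine that makes this work is Proposition \ref{NSDIFCDA-DL-Prop3}: each class of the $L(D)$-partition, being a strong in-dominating set of $L(D)$, is a strong cover of $D$, i.e.\ its arcs form a \emph{spanning} strong subdigraph of $D$. First I would record the structural fact that $Q(D)\langle A(D)\rangle = L(D)$: for an arc-vertex $(u,v)$ one has $N^{+}_{Q(D)}((u,v))\cap A(D)=\{(v,y)\in A(D)\}$, which is exactly the out-neighborhood in the line digraph, so the adjacencies among arc-vertices in $Q(D)$ coincide with those of $L(D)$. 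I would also note in $Q(D)$ that a vertex $x\in V(D)$ has as out-neighbors precisely the arcs leaving $x$ and as in-neighbors precisely the arcs entering $x$. Since $D$ is nontrivial strong, Lemma \ref{remark3} and Theorem \ref{First Results Prop 1} guarantee $L(D)$ is strong and admits a strong in-domatic partition, so $\mathsf{d}_{s}^{-}(L(D))$ is well defined.

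Next I would fix a $\mathsf{d}_{s}^{-}$-partition $\mathscr{S}=\{S_{1},\ldots,S_{k}\}$ of $L(D)$ with $k=\mathsf{d}_{s}^{-}(L(D))$, where each $S_{i}\subseteq A(D)$, and define $S_{1}'=S_{1}\cup V(D)$ and $S_{i}'=S_{i}$ for $i\in\{2,\ldots,k\}$. Since $\mathscr{S}$ partitions $A(D)$, the family $\mathscr{S}'=\{S_{1}',\ldots,S_{k}'\}$ is a partition of $V(Q(D))=V(D)\cup A(D)$. The claim is that $\mathscr{S}'$ is a strong in-domatic partition, which yields $\mathsf{d}_{s}^{-}(Q(D))\geq k=\mathsf{d}_{s}^{-}(L(D))$.

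For in-domination I would argue in two cases. An arc-vertex $(u,v)\notin S_{i}'$ lies in some $S_{j}$; because $S_{i}$ is in-dominating in $L(D)$, it has an out-neighbor $(v,y)\in S_{i}$ in $L(D)$, and by the structural fact this $(v,y)$ is still an out-neighbor of $(u,v)$ in $Q(D)$, so $(u,v)$ is in-dominated by $S_{i}'$. A vertex $x\in V(D)$ can only fail to lie in $S_{i}'$ when $i\geq 2$; since $D[S_{i}]$ is a spanning strong subdigraph, $x$ has an out-arc $(x,v)\in S_{i}$, which is an out-neighbor of $x$ in $Q(D)$, so $x$ is in-dominated. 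For strongness, the classes $S_{i}'$ with $i\geq 2$ satisfy $Q(D)\langle S_{i}'\rangle=Q(D)\langle S_{i}\rangle=L(D)\langle S_{i}\rangle$, which is strong by hypothesis. For $S_{1}'$, the subdigraph $L(D)\langle S_{1}\rangle$ is a strong ``core''; each inserted vertex $x\in V(D)$ has, by the spanning strong subdigraph $D[S_{1}]$, an in-arc $(u,x)\in S_{1}$ and an out-arc $(x,v)\in S_{1}$, giving $(u,x)\to x$ and $x\to(x,v)$ in $Q(D)$. Thus every $x$ reaches and is reached from the strong core, so $Q(D)\langle S_{1}'\rangle$ is strong.

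The main obstacle, and the decisive observation, is the use of Proposition \ref{NSDIFCDA-DL-Prop3} to upgrade each class from merely ``strong in $L(D)$'' to ``spanning strong in $D$''; this single fact simultaneously furnishes the out-arcs needed to in-dominate every $x\in V(D)$ and the two-way attachment of each inserted $x$ to the strong core, so the placement of the vertices of $V(D)$ never endangers strongness. I expect the only care needed is the bookkeeping that the out-neighbor relation used for in-domination of arc-vertices is genuinely inherited from $L(D)$ to $Q(D)$, which the identity $Q(D)\langle A(D)\rangle=L(D)$ settles.
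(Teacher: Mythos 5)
Your proposal is correct and follows essentially the same route as the paper: both absorb $V(D)$ into one class of a $\mathsf{d}_{s}^{-}$-partition of $L(D)$ and invoke Proposition \ref{NSDIFCDA-DL-Prop3} to show that each class, being a strong cover of $D$, supplies the out-arcs that in-dominate every $x\in V(D)$ and the two-way attachment of each such $x$ to the strong core $L(D)\langle S_{1}\rangle$. No gaps.
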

\begin{proof}
Let $\mathscr{U}$ be a $\mathsf{d}_{s}^{-}$-partition of $V(L(D))$, where $\mathscr{U}= \{ S_{1},  \ldots, S_{k} \}$. Consider the set $\mathscr{S}= \{ S_{1}\cup V(D), S_{2}, \ldots   , S_{k} \}$. Since $V(Q(D))=V(D) \cup A(D)$ we have that $\mathscr{S}$ is a partition of $V(Q(D))$. We will show that every set in $\mathscr{S}$ is a strong in-dominating set in $Q(D)$.

\textbf{Claim 1.} Every set in $\mathscr{S}$ is an in-dominating set in $Q(D)$.

First, we will show that Claim 1 holds for the set $S_{1} \cup V(D)$. Let $x$ be a vertex in $V(Q(D))\setminus ( S_{1}\cup V(D))$. Since $x \in A(D)$ and $S_{1}$ is an in-dominating set in $L(D)$, then there exists $b$ in $S_{1}$ such that $(x,b) \in A(L(D))$; since $A(L(D))) \subseteq A(Q(D))$, then $(x,b) \in A(Q(D))$. Therefore $S_{1} \cup V(D)$ is an in-dominating set in $Q(D)$. 

Now we will show that Claim 1 holds for $S_{i}$, where $i \in \{ 2, \ldots , k\}$. Let $x$ be a vertex in $V(Q(D))\setminus S_{i}$.  If $x \in A(D)$, since $S_{1}$ is an in-dominating set, then there exists $b$ in $S_{1}$ such that $(x,b) \in A(L(D))$; because of $A(L(D)) \subseteq A(Q(D))$, we get that $(x,b) \in A(Q(D))$. 

Suppose that $x \in V(D)$. Since $S_{i}$ is a strong in-dominating set in $L(D)$ it follows from Proposition \ref{NSDIFCDA-DL-Prop3} that $S_{i}$ is a strong cover of $D$. So, there exists an arc $a$ in $S_{i}$ such that $a=(x,z)$ for some $z$ in $V(D)$, which implies that $(x,a) \in A(Q(D))$. 

Therefore, every set in $\mathscr{S}$ is an in-dominating set in $Q(D)$.

\textbf{Claim 2.} For every set $W$ in $\mathscr{S}$, $Q(D) \langle W \rangle$ is strong.

Since $L(D)$ is an induced subdigraph of $Q(D)$ and $L(D) \langle S_{i} \rangle$ is a strong digraph for every $i$ in $\{ 1, \ldots k \}$, then we have that   $Q(D) \langle S_{i} \rangle$ is strong for every $i$ in $\{1, \ldots , k\}$. It remains to prove that $Q(D) \langle S_{1}\cup V(D) \rangle$ is a strong.

Let $x$ and $z$ be two vertices in $S_{1} \cup V(D)$. Consider the following three cases.
\begin{itemize}
\item \textbf{Case 1.} $\{ x,z\}\subseteq S_{1}$. 

Since $L(D) \langle S_{1} \rangle$ is strong, then there exists an $xz$-walk contained in $L(D) \langle S_{1} \rangle$. Since $L(D)$ is a subdigraph of $Q(D)$, we have that there exists an $xz$-walk contained in $Q(D) \langle S_{1} \rangle$.

\item \textbf{Case 2.} $x \in V(D)$ and $z\in S_{1}$.

Since $S_{1}$ is a strong cover of $D$ (by Proposition \ref{NSDIFCDA-DL-Prop3}), we get that there exist arcs $a$ and $b$ in $S_{1}$ such that $a=(v,x)$ and $b=(x,u)$ for some $v$ and $u$ in $V(D)$. It follows from Case 1 that there exists an $za$-walk contained in $Q(D) \langle S_{1} \rangle$, say $C_{1}$. Therefore, $ C_{1} \cup (a,x)$ is a $zx$-walk in $Q(D) \langle S_{1}\cup V(D) \rangle $. In the same way we can prove that there exists a $bz$-walk in $Q(D)\langle S_{1}\rangle$, say $C_{2}$, which implies that $(x,b)\cup C_{2}$ is an $xz$-walk in $Q(D)\langle S_{1} \cup V(D) \rangle$.

\item \textbf{Case 3.} $\{x, z\} \subseteq V(D)$.

Consider $a$ in $S_{1}$. By Case 2 we have that there exists an $xa$-walk contained in $Q(D) \langle S_{1} \cup V(D) \rangle $, say $C_{1}$, and there exists an $az$-walk contained in $Q(D) \langle S_{1}\cup V(D) \rangle$, say $C_{2}$. It follows that $C_{1} \cup C_{2}$ is an $xz$-walk contained in $Q(D) \langle S_{1} \cup V(D) \rangle$.
\end{itemize}
Therefore, $\mathscr{S}$ is a strong in-domatic partition of $V(Q(D))$. In particular, $|\mathscr{S}|\leq \mathsf{d}^{-}_{s}(Q(D))$. Thus, $\mathsf{d}_{s}^{-}(L(D))\leq \mathsf{d}_{s}^{-}(Q(D))$.
\end{proof}

\begin{proposition}
\label{NSDIFCDA-DA-Prop3}
If $D$ is a strong digraph of order at least three, then $\mathsf{d}_{s}^{-}( L(D)  ) +1 \leq \mathsf{d}_{s}^{-}(  T(D)  ).$
\end{proposition}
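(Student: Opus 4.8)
The plan is to mimic the construction from the proof of Proposition \ref{NSDIFCDA-DA-Prop2}, but to exploit the extra arcs present in $T(D)$ so as to gain one additional class. First I would fix a $\mathsf{d}_{s}^{-}$-partition of $V(L(D))$, say $\mathscr{U}=\{S_{1},\ldots,S_{k}\}$ with $k=\mathsf{d}_{s}^{-}(L(D))$, where each $S_{i}\subseteq A(D)$ (such a partition exists since $D$ strong implies $L(D)$ strong by Lemma \ref{remark3} and Theorem \ref{First Results Prop 1}). The candidate partition of $V(T(D))=V(D)\cup A(D)$ is
$$\mathscr{S}=\{\,V(D),\,S_{1},\,\ldots,\,S_{k}\,\},$$
which has $k+1$ classes and is genuinely a partition, since $V(D)$ and $A(D)=S_{1}\cup\cdots\cup S_{k}$ are disjoint and cover $V(T(D))$. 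The task reduces to verifying that $\mathscr{S}$ is a strong in-domatic partition of $T(D)$.

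The decisive new ingredient, unavailable for $Q(D)$, is that $T(D)\langle V(D)\rangle\cong D$. In $Q(D)$ the set $V(D)$ induces an arcless digraph, whereas the definition of $T(D)$ adds a copy of $A(D)$ among the vertices of $V(D)$; hence $T(D)\langle V(D)\rangle$ is strong. Moreover $V(D)$ is an in-dominating set of $T(D)$, because every arc-vertex $(u,v)\in A(D)$ satisfies $v\in N^{+}_{T(D)}((u,v))\cap V(D)$. Therefore $V(D)$ is a strong in-dominating set that can stand alone as its own class, and this is exactly where the extra $+1$ originates.

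For the classes $S_{i}$ I would reproduce the reasoning of Proposition \ref{NSDIFCDA-DA-Prop2}. Because $T(D)\langle A(D)\rangle$ is the line digraph of $D$ and $L(D)\langle S_{i}\rangle$ is strong, each $T(D)\langle S_{i}\rangle$ is strong. For in-domination: every arc-vertex outside $S_{i}$ is already in-dominated by $S_{i}$ inside $L(D)$, and $A(L(D))\subseteq A(T(D))$; and every vertex $x\in V(D)$ is in-dominated because, by Proposition \ref{NSDIFCDA-DL-Prop3}, $S_{i}$ is a strong cover, so $D[S_{i}]$ is a spanning strong subdigraph of $D$, forcing $x$ to have an out-arc $a=(x,z)\in S_{i}$ (here the order hypothesis guarantees $D[S_{i}]$ is nontrivial). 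The definition of $T(D)$ then gives $(x,a)\in A(T(D))$, so $x$ is in-dominated by $S_{i}$.

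I expect the only nontrivial point to be this in-domination of the vertices of $V(D)$ by each $S_{i}$; the remaining verifications are a direct transcription of the argument for Proposition \ref{NSDIFCDA-DA-Prop2}. Once all of them are in place, $\mathscr{S}$ is a strong in-domatic partition of $T(D)$ with $k+1$ classes, yielding $\mathsf{d}_{s}^{-}(T(D))\geq k+1=\mathsf{d}_{s}^{-}(L(D))+1$.
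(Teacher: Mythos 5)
Your proposal is correct and follows essentially the same route as the paper: the same partition $\{V(D),S_{1},\ldots,S_{k}\}$ of $V(T(D))$, the same observation that $T(D)\langle V(D)\rangle$ is strong because $D$ is, and the same use of Proposition \ref{NSDIFCDA-DL-Prop3} to show each $S_{i}$ (as a strong cover) in-dominates the vertices of $V(D)$. No substantive differences from the paper's argument.
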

\begin{proof}
Let $\mathscr{S}'$ be a $\mathsf{d}_{s}^{-}$-partition of $V(L(D))$, say $\mathscr{S}'= \{ S_{1}, \ldots , S_{k} \}$, and consider $\mathscr{S}= \{ V(D) , S_{1}, \ldots , S_{k} \}$. We claim that $\mathscr{S}$ is a strong in-domatic partition of $V(T(D))$. It follows from definition of $T(D)$ that  $\mathscr{S}$ is a partition of $V(T(D))$. It only remains to show that every element in $\mathscr{S}$ is a strong in-dominating set in $T(D)$.

\textbf{Claim 1.} For every set $W$ in $\mathscr{S}$, $T(D)\langle W \rangle$ is strong.

Since $L(D) \langle S_{i}\rangle$ is strong for every $i$ in $\{1, \ldots , k \}$, and $L(D) \langle S_{i} \rangle$ is a spanning subdigraph of $T(D)\langle S_{i} \rangle$, then $T(D) \langle S_{i} \rangle$ is a strong digraph. On the other hand, since $D$ is strong, then $ T(D) \langle V(D) \rangle$ is strong. Therefore, for every set $W$ in $\mathscr{S}$, $T(D)\langle W \rangle$ is strong.

\textbf{Claim 2.} Every element in $\mathscr{S}$ is an in-dominating set in $T(D)$.

We will prove that $V(D)$ is an in-dominating set in $T(D)$. Consider a vertex $x$ in $V(T(D))\setminus V(D)$, it follows that $x=(u,v)$ for some $u$ and $v$ in $V(D)$ and, by definition of $T(D)$, we conclude that $(x, v) \in A(T(D))$. Therefore, $V(D)$ is an in-dominating set in $T(D)$. 

On the other hand, we will prove that $S_{i}$ is an in-dominating set for every $i$ in $\{1, \ldots , k\}$. Let $S_{i}$ in $\mathscr{S}$ and $x$ in $V(T(D)) \setminus S_{i}$ for some $i$ in $\{1 ,  \ldots , k\}$. If $x\notin V(D)$, since $S_{i}$ is an in-dominting set in $L(D)$, then $x$ is in-dominated by $S_{i}$. If $x\in V(D)$, since $S_{i}$ is a strong cover of $D$ (by Proposition \ref{NSDIFCDA-DL-Prop3}), it follows that there exists an arc $a$ in $S_{i}$ such that $a=(x,v)$ for some $v$ in $V(D)$, which implies that $x$ is in-dominated by $S_{i}$. Therefore, every element in $\mathscr{S}$ is an in-dominating set in $D$.

Since $\mathscr{S}$ is a strong in-dominating partition of $V(T(D))$ we have that $|\mathscr{S}| \leq \mathsf{d}_{s}^{-}(  T(D)  )$, which implies that $\mathsf{d}_{s}^{-}( L(D)  ) +1 \leq \mathsf{d}_{s}^{-}(  T(D)  ).$
\end{proof}

\subsection{A note on strong out-domatic number}

Let $D$ be a digraph, the \textbf{converse of $\boldsymbol{D}$}, denoted by $\overset{\leftarrow}{D}$, is the digraph such that $V(\overset{\leftarrow}{D})=V(D)$ and $(u,v) \in A(\overset{\leftarrow}{D})$ if and only if $(v,u) \in A(D)$. Notice that if $S$ is an in-dominating set in $V(D)$, then for every vertex $x$ in $V(\overset{\leftarrow}{D}) \setminus S$ there exists $w$ in $S$ such that $(w,x) \in A(\overset{\leftarrow}{D})$.  
Therefore, we can consider the following definition; an \textit{out-domatic partition of $V(D)$} is a partition of $V(D)$, say $\mathfrak{S}$, such that for every $S$ in $\mathfrak{S}$, $D\langle S \rangle$ is a strong digraph and every vertex not in $S$ has at least one in-neighbor in $S$. Notice that $\mathfrak{S}$ is a strong in-domatic partition of $V(D)$ if and only if $\mathfrak{S}$ is a strong out-domatic partition of $V(\overset{\leftarrow}{D})$.  The maximum number of elements in an out-domatic partition is called the \textit{strong out-domatic number of $D$} and it is denoted by $\mathsf{d}_{s}^{+}(D)$. It is straightforward to see that $\mathsf{d}_{s}^{-}(D)=\mathsf{d}_{s}^{+}(\overset{\leftarrow}{D})$.


\begin{thebibliography}{99}
\bibitem{9}{M. Aigner. On the linegraph of a directed graph. Mathematische Zeitschrift 102 (1) (1967) 56-61.}

\bibitem{99}{J. Bang-Jensen, G. Gutin, Classes of Directed Graphs, Springer, 2018.}

\bibitem{8}{J. Bang-Jensen, G. Gutin,   Digraphs: Theory, Algorithms and Applications, Springer, London, 2000.}

\bibitem{tree}{X. Chen, Tree domatic number in graphs, Opuscula Mathematica 27 (1) (2007) 5-11.}

\bibitem{1}{G. Ben\'itez-Bobadilla, N\'umero Semidominante Coloreable en Digr\'aficas. Tesis, Facultad de Ciencias, UNAM (2014).}

\bibitem{75}{G. Chartrand, P. Zhang, A first  course in graph theory. Dover Publications Inc. 2012} 

\bibitem{total}{E. J. Cockayne, R. M. Dawes and S. T. Hedetniemi, Total domination in graphs, Networks 10 (1980) 211-219.}

\bibitem{2}{E.J. Cockayne, S.T. Hedetniemi, Towards a theory of domination in graphs. Networks  7 (1977), 247-261.}

\bibitem{ejem3}{P. Dankelmann, N. J. Calkin, The domatic number of regular graphs, Ars Combin. 73 (2004), 247–255.}

\bibitem{complex2}{M. Garey and D. Johnson, Computers and intractability: A Guide to the Theory of NP-Completeness. W. H. Freeman and Company, New York, 1979.}

\bibitem{3}{Hartnell Bert L., Rall Douglas F., Connected domatic number in planar graphs. Czechoslovak Mathematical Journal 51 (2001), 173-179.}

\bibitem{ejem2}{U. Feige, M. M. Halldórsson, and G. Kortsarz, Approximating the domatic number, In Proceedings of the thirty-second annual ACM symposium on Theory of computing (STOC 00). Association for Computing Machinery, New York, NY, USA, 134–143. DOI:https://doi.org/10.1145/335305.335321}


\bibitem{16}{T. W. Haynes, S. T. Hedetniemi, P. Slater, Domination in Graphs-Advanced Topics. Marcel Dekker Inc., New York 1998.}

\bibitem{4}{T. W. Haynes, S. T. Hedetniemi, P. Slater, Fundamentals of Domination in Graphs. CRC Press, 1998.}

\bibitem{5}{Hedetniemi S.T., Laskar R. Connected domination in graphs. Graph Theory and Combinatorics. Academic Press, London-New York (1984), 209-217.}

\bibitem{ejem1}{M. Kijewska, Domatic number of graph products, Journal of Mathematics and Applications 30, (2008), 71-81.}

\bibitem{complex}{Poon SH., Yen W.CK., Ung CT. Domatic partition on several classes of graphs. In: Lin G. (eds) Combinatorial Optimization and Applications. COCOA 2012. Lecture Notes in Computer Science 7402. Springer, Berlin.}

\bibitem{complex1}{T. Riege, J. Rothe, H. Spakowski, M. Yamamoto, An improved exact algorithm for the domatic number problem, Information Processing Letters 101 (2007) 101-106.}


\bibitem{6}{B. Zelinka, Connected domatic number of a graph. Math. Slovaca 36 (1986), 387-392.}

\bibitem{kdomatic}{B. Zelinka, On $k$-domatic numbers of graphs, Czech. Math. J. 33 (1983), 309-313.}

\bibitem{7}{B. Zelinka, Semidomatic numbers of directed graphs, Math. Slovaca 34 (1984), 371-374.}
\end{thebibliography}
\end{document}